\documentclass[12pt,reqno]{amsart}
\usepackage{amsfonts}
\usepackage{amssymb, amsmath, latexsym}
\usepackage{longtable}
\usepackage{lipsum}
\usepackage{amsthm}
\usepackage{array}
\usepackage{hyperref}
\usepackage{mleftright}
\usepackage{orcidlink}
\usepackage[numbers,sort&compress]{natbib}
\setcitestyle{square}
\usepackage{graphicx,mathtools}
\usepackage{mathtools}

\usepackage{array}
\usepackage{graphicx,mathtools}
\usepackage{ragged2e}

\numberwithin{equation}{section}
\theoremstyle{plain}
\newtheorem{theorem}{Theorem}[section]

\newtheorem{definition}{Definition}[section]
\newtheorem{lemma}[theorem]{Lemma}
\newtheorem{example}{Example}[section] 
\newtheorem{conjecture}[theorem]{Conjecture}

\newtheorem{proposition}[theorem]{Proposition}
\newtheorem{remark}[theorem]{Remark}

\allowdisplaybreaks
\title{Difference of the Sum of All Odd and Even Overlined Parts of Overpartitions}
\author[N. D. Baruah]{NAYANDEEP DEKA BARUAH\, \orcidlink{0000-0003-1129-2929}}
\address{Department of Mathematical Sciences, Tezpur University,  Assam 784028, India}
\email{nayan@tezu.ernet.in, nayandeeptezu@gmail.com}
\author[P. GOGOI]{Pankaj Gogoi\, \orcidlink{0009-0002-4155-5654}}
\address{Department of Mathematical Sciences, Tezpur University,  Assam 784028, India}
\email{msp23110@tezu.ac.in, gopankajgo07@gmail.com}
\date{}

\subjclass[2020]{Primary 11P83; Secondary 11F37, 05A17}
\keywords{Integer partition, overpartition, half-integral weight modular form}

\begin{document}

\begin{abstract}
Recently, Garvan and Sarma \cite{GarvanSarma2026}  studied sums of non-overlined parts in overpartitions and partitions without repeated odd parts. In this paper, we study the corresponding statistic for overlined parts. We define $\textup{OSOME}(n)$ as the sum of all odd overlined parts minus the sum of all even overlined parts in the overpartitions of $n$. We obtain a closed form for its generating function and use it to prove congruences by elementary $q$-series methods, the theory of half-integral weight modular forms, and known congruences for the overpartition function.
\end{abstract}

\maketitle

\section{Introduction}

For complex numbers $a$ and $q$ with $|q|<1$, define
$$
 (a;q)_0:=1,\qquad
 (a;q)_n:=\prod_{j=0}^{n-1}(1-aq^j),\qquad
 (a;q)_\infty:=\prod_{j=0}^{\infty}(1-aq^j).
$$
Throughout the paper, we write
\begin{align*}
    f_r:&=(q^r;q^r)_\infty\qquad (r\geq1).
\end{align*}

A partition of a nonnegative integer $n$ is a non-increasing sequence of positive integers whose sum is $n$. Let $p(n)$ denote the number of partitions of $n$. Euler proved that
$$
 \sum_{n=0}^{\infty}p(n)q^n=\frac1{f_1}.
$$
The following congruences given by Ramanujan \cite{Ramanujan1919} are among the most well-known arithmetic properties satisfied by $p(n)$. For all nonengative integers $n$,
\begin{align*}
 p(5n+4)&\equiv0\pmod5,\\
 p(7n+5)&\equiv0\pmod7,\\
 p(11n+6)&\equiv0\pmod{11}.
\end{align*}

Recently, Andrews and Ghosh Dastidar \cite{AndrewsGhoshDastidar2026} introduced the functions $\textup{SOME}(n)$ and $\textup{DSOME}(n)$ related to partitions. Here $\textup{SOME}(n)$ is the sum of all the odd parts minus the sum of all the even parts in the partitions of $n$, and $\textup{DSOME}(n)$ is the analogous difference for partitions into distinct parts. The current authors \cite{BaruahGogoi2026} obtained a closed form for the generating function of $\textup{DSOME}(n)$ and used it to derive new congruences modulo $4$ and $8$.

An overpartition \cite{CorteelLovejoy2004} is a partition in which the first occurrence (equivalently, the final occurrence) of a number may be overlined. Let $\overline{p}(n)$ denote the number of overpartitions of $n$. Its generating function is
\begin{equation}\label{overpartition-gf}
 \sum_{n=0}^{\infty}\overline{p}(n)q^n
 =\frac{(-q;q)_\infty}{(q;q)_\infty}
 =\frac{f_2}{f_1^2}
 =\frac1{\varphi(-q)},
\end{equation}
where
$$
 \varphi(q):=\sum_{n=-\infty}^{\infty}q^{n^2}.
$$
\begin{example}\label{overpartititonexample}
The overpartitions of $4$ are $4$, $\overline4$, $3+1$, $\overline3+1$, $3+\overline1$, $\overline3+\overline1$, $2+2$,  $\overline2+2$, $2+1+1$, $\overline2+1+1$, $2+\overline1+1$,
 $\overline2+\overline1+1$, $1+1+1+1$,  and $\overline1+1+1+1$.
\end{example}
Congruences for the overpartition function have been studied by many authors; see, for example, Chen, Sun, Wang, and Zhang \cite{ChenSunWangZhang2015} and the general modular-form framework of Treneer \cite{Treneer2006}.

Recently, Gireesh and Hemanthkumar \cite{GireeshHemanthkumar2026}
introduced an overpartition analogue $\overline{\textup{SOME}}(n)$ of $\textup{SOME}(n)$. They established several congruences modulo $3$, $5$, and certain powers of $2$ for this function.

Very recently, Garvan and Sarma \cite{GarvanSarma2026} investigated the sums of non-overlined parts in overpartitions, alongside related statistics for partitions without repeated odd parts. For a given integer $n$, they considered the functions $\textup{SONO}(n)$, $\textup{SENO}(n)$,  and $\textup{SNO}(n)$ as the sum of all odd non-overlined parts, the sum of all  even non-overlined parts, and the total sum of all non-overlined parts, respectively, across all overpartitions of $n$. Utilizing classical theta-function identities and dissections, they established the following congruences:$$\begin{aligned} \mathrm{SONO}(5n + 3) &\equiv 0 \pmod{5},\\\mathrm{SENO}(7n + 5) &\equiv 0 \pmod{7}, \\  \mathrm{SNO}(5n + 2) &\equiv 0 \pmod{5}, \\ \mathrm{SNO}(5n + 4) &\equiv 0 \pmod{5}, \\ \mathrm{SNO}(7n + 3) &\equiv 0 \pmod{7}. \end{aligned}$$Motivated by their work, in this paper we study  the difference of the sum of all the odd and even overlined parts of overpartitions. Let $\textup{OSOME}_o(n)$ and $\textup{OSOME}_e(n)$ denote the sum of all odd and  even overlined parts, respectively, in all overpartitions of $n$. Define
\begin{equation}\label{OSOME-definition}
 \textup{OSOME}(n):=\textup{OSOME}_o(n)-\textup{OSOME}_e(n).
\end{equation}
It follows from Example \eqref{overpartititonexample} that 
 $\textup{OSOME}_o(4)=11$ and $\textup{OSOME}_e(4)=10$. Thus, $\textup{OSOME}(4)=1$.

Simple closed forms for the generating functions of $\textup{OSOME}(n)$, $\textup{OSOME}_o(n)$, and $\textup{OSOME}_e(n)$ are presented in the following theorem.
\begin{theorem}\label{CF}
We have
\begin{align}
 \sum_{n=0}^{\infty}\textup{OSOME}(n)q^n
 &=\frac18\left(\frac{f_2}{f_1^2}-\frac{f_1^6}{f_2^3}\right)
 =\frac18\left(\frac{1}{\varphi(-q)}-\varphi^3(-q)\right),\label{closedform}\\
 \sum_{n=0}^{\infty}\textup{OSOME}_o(n)q^n&=\frac{1}{24\varphi(-q)}\left(\varphi^4(q)-2\varphi^4(-q)+1\right)\label{closedform1},\\
 \sum_{n=0}^{\infty}\textup{OSOME}_e(n)q^n&=\frac{1}{24\varphi(-q)}\left(\varphi^4(q)+\varphi^4(-q)-2\right)\label{closedform2}.
\end{align}
\end{theorem}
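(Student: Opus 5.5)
Our plan is to introduce a marking variable for the overlined parts and differentiate. Each part size $k$ contributes a factor $(1+zq^k)/(1-q^k)$, since $k$ may appear overlined once or not at all, together with any number of non-overlined copies. If we attach the weight $z^{\pm k}$ according to parity, then differentiating at $z=1$ produces the sum statistic. Concretely, consider
\[
F(z;q)=\prod_{k\ge1}\frac{1+z^{(-1)^{k+1}k}q^k}{1-q^k}.
\]
Applying $\partial/\partial z$ at $z=1$ gives
\[
\sum_{n\ge0}\textup{OSOME}(n)q^n=\frac{f_2}{f_1^2}\sum_{k\ge1}\frac{(-1)^{k+1}kq^k}{1+q^k}.
\]
In the same way, restricting to odd $k$ or to even $k$ gives the generating functions for $\textup{OSOME}_o$ and $\textup{OSOME}_e$, each equal to $f_2/f_1^2$ times the corresponding partial Lambert series. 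The whole proof then reduces to evaluating these three Lambert series in closed form.

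To evaluate them, we first expand $\frac{kq^k}{1+q^k}=\sum_{m\ge1}(-1)^{m+1}kq^{km}$, which rewrites each series as a divisor-type sum. Next we compare with the classical Jacobi four-squares identity,
\[
\varphi^4(q)=1+8\sum_{k\ge1}\frac{kq^k}{1+(-q)^k}.
\]
The companion form of this identity for $\varphi^4(-q)$ is obtained by replacing $q$ with $-q$. Let $S_o=\sum_{k\text{ odd}}\frac{kq^k}{1+q^k}$ and $S_e=\sum_{k\text{ even}}\frac{kq^k}{1+q^k}$. For odd $k$, $(-q)^k=-q^k$, while for even $k$ the denominators $1+q^k$ and $1+(-q)^k$ agree. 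This lets us express both $\varphi^4(q)$ and $\varphi^4(-q)$ through $S_o$, $S_e$, and the odd-$k$ series $\sum_{k\text{ odd}}kq^k/(1-q^k)$. The relation $\frac{1}{1-q^k}=\frac{1}{1+q^k}+\frac{2q^k}{1-q^{2k}}$, together with splitting even $k=2j$ into $j$ odd and $j$ even, should let us solve linearly for $S_o$ and $S_e$. We expect
\[
24S_o=\varphi^4(q)-2\varphi^4(-q)+1, \qquad 24S_e=\varphi^4(q)+\varphi^4(-q)-2.
\]
These are exactly the bracketed expressions in \eqref{closedform1} and \eqref{closedform2}, so multiplying by $f_2/f_1^2=1/\varphi(-q)$ yields both identities. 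A more robust way to organize this step is to use the standard divisor-sum formulas for the coefficients of $\varphi^4(\pm q)$, namely $r_4(n)=8\sum_{d\mid n,\,4\nmid d}d$. We then verify the two coefficient identities separately for $n$ odd, for $n\equiv 2\pmod 4$, and for $4\mid n$, writing $n=2^a m$ with $m$ odd.

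Finally, \eqref{closedform} follows from $S_o-S_e=\frac{1}{24}\bigl(3-3\varphi^4(-q)\bigr)=\frac18\bigl(1-\varphi^4(-q)\bigr)$. Dividing by $\varphi(-q)$ gives $\frac18\bigl(1/\varphi(-q)-\varphi^3(-q)\bigr)$. The eta-quotient form then follows from $\varphi(-q)=f_1^2/f_2$.

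The main obstacle is the second step: extracting the separate parity pieces $S_o$ and $S_e$ from the four-squares identity, which is the only nontrivial input. We expect to handle it by the coefficient comparison just described. In that comparison, the sum $\sum_{d\mid n}(-1)^{n/d+1}d$ restricted to odd or even $d$ must be matched against $\sum_{d\mid n,\,4\nmid d}d$ in each residue class of $n$ modulo 4, which is routine but requires care with the signs.
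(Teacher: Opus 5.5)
Your proposal is correct. Its first step is the paper's own. The paper differentiates the same marked product $\prod_{n}\bigl(1+(qz)^{2n-1}\bigr)\bigl(1+(qz^{-1})^{2n}\bigr)/(q;q)_\infty$ at $z=1$, so everything reduces to your Lambert series $S_o$, $S_e$ multiplied by $f_2/f_1^2=1/\varphi(-q)$.

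The two routes diverge in evaluating these series. The paper quotes two Lambert-series evaluations from Berndt's book:
\begin{itemize}
\item the $q\to-q$ form of your Jacobi identity, $\varphi^4(-q)=1-8(S_o-S_e)$, which gives \eqref{closedform} at once;
\item the identity $\sum_{k\ \mathrm{odd}}kq^k/(1-q^k)=\frac1{24}\bigl(2\varphi^4(q)-\varphi^4(-q)-1\bigr)$, which under $q\to-q$ becomes the formula for $S_o$ and hence gives \eqref{closedform1}.
\end{itemize}
It then gets \eqref{closedform2} by subtraction.

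You instead derive both formulas from the divisor-sum form of Jacobi's theorem alone, by comparing coefficients. That check is shorter than you anticipate: two cases suffice, and you do not need to split even $n$ modulo $4$. Write $\sigma$ for the divisor sum.
\begin{itemize}
\item For odd $n$: $[q^n]S_o=\sigma(n)$, $[q^n]S_e=0$, and $r_4(n)=8\sigma(n)$.
\item For $n=2^am$ with $a\ge1$ and $m$ odd: $[q^n]S_o=-\sigma(m)$, $[q^n]S_e=\sigma(m)\bigl(2^a-\sum_{b=1}^{a-1}2^b\bigr)=2\sigma(m)$, and $r_4(n)=24\sigma(m)$.
\end{itemize}
Together with $[q^n]\varphi^4(-q)=(-1)^nr_4(n)$, this matches both of your claimed identities, constant terms included.

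The paper's route is quicker given the references. Yours is self-contained modulo the four-square theorem. It also independently pins down the exponents: the quoted formulas appear in the paper with some fourth powers dropped, for instance $2\varphi(q)$ where $2\varphi^4(q)$ is meant.

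Do not rely on your first, linear-elimination sketch, though. The relation $\frac{1}{1-q^k}=\frac{1}{1+q^k}+\frac{2q^k}{1-q^{2k}}$ and the split $k=2j$ only relate $S_o$, $S_e$ and $\sum_{k\ \mathrm{odd}}kq^k/(1-q^k)$ to the same series at $q^2$. So the linear system does not close unless you add a duplication formula such as $\varphi^4(q)+\varphi^4(-q)=4\varphi^4(q^2)-2\varphi^4(-q^2)$, which follows from $\varphi^2(q)+\varphi^2(-q)=2\varphi^2(q^2)$ and $\varphi(q)\varphi(-q)=\varphi^2(-q^2)$. The coefficient comparison avoids this and is the version to write up.
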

The $\textup{OSOME}$ function satisfies Ramanujan-like congruences for small primes
\begin{theorem}\label{cong_for3,5,7,11}
For every nonnegative integer $n$, we have
\begin{align}
 \label{cong-mod3}\textup{OSOME}(24n+19)&\equiv0\pmod3,\\
  \label{cong-mod5}\textup{OSOME}(40n+27)&\equiv0\pmod5,\\
  \label{cong-mod8}\textup{OSOME}(8n+7)&\equiv0\pmod8,\\
  \label{cong-mod11}\textup{OSOME}(88n+55)&\equiv0\pmod{11}.
\end{align}
\end{theorem}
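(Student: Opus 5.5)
Throughout, write $\overline{p}(n)$ for the overpartition function and $r_3(n)$ for the number of representations of $n$ as a sum of three squares. By Theorem \ref{CF},
\[
8\,\textup{OSOME}(n)=\overline{p}(n)-(-1)^n r_3(n),
\]
since $\varphi^3(-q)=\sum_{n\ge0}(-1)^n r_3(n)q^n$. All four arithmetic progressions in Theorem \ref{cong_for3,5,7,11} consist of odd $n$. The plan is to treat the odd-prime moduli and the modulus $8$ separately.

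\textbf{Moduli $3$, $5$, $11$.} Since $8$ is invertible modulo $3$, $5$ and $11$, it suffices to show that $\overline{p}(n)\equiv -r_3(n)\pmod \ell$ on the stated progressions. In each progression $n\equiv 3\pmod 8$:
\[
24n+19\equiv 3,\qquad 40n+27\equiv 3,\qquad 88n+55\equiv 7 \pmod 8 .
\]
The last one has $n\equiv 7\pmod 8$, so $r_3(n)=0$ there by Legendre's three-square theorem.

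\emph{The case $\ell=11$.} Here we only need $\overline{p}(88n+55)\equiv0\pmod{11}$. I would try to reduce this to a known congruence for $\overline{p}$ in the literature, or else derive it from the modular-form approach in the Treneer framework.

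\emph{The cases $\ell=3$ and $\ell=5$.} For $n\equiv 3\pmod 8$ one has $r_3(n)=12H(-n)$, where $H$ is the Hurwitz class number. I would then use the two mod-$\ell$ facts separately:
\begin{itemize}
\item[(i)] for $\ell=3$: $r_3(n)\equiv 0\pmod 3$, immediate from the factor $12$;
\item[(ii)] for $\ell=5$: $\overline{p}(40n+35)\equiv0\pmod5$, a known result of Treneer type (note $40n+27\equiv 2 \pmod 5$), together with $r_3(40n+27)\equiv 0\pmod 5$.
\end{itemize}

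\textbf{Main obstacle.} The hardest step is the simultaneous mod-$\ell$ vanishing of $\overline{p}$ and of $r_3$ on these progressions. For $\ell=3$ we additionally need $\overline{p}(24n+19)\equiv0\pmod3$. My first attempt would be to write $1/\varphi(-q)\equiv \varphi(-q)^{2}/\varphi(-q^{3})\pmod 3$ and use the 3-dissection of $\varphi$. For $\ell=5$ I would use $1/\varphi(-q)\equiv \varphi^4(-q)/\varphi(-q^5)$ and the 5-dissection of $\varphi$, extracting the required terms.

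If this fails, the fallback is to treat
\[
\tfrac18\left(\frac{1}{\varphi(-q)}-\varphi^3(-q)\right)
\]
as a weight $\pm 3/2$ form, apply $U_\ell$-type operators and quadratic twists, and verify the congruence up to the Sturm bound.

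\textbf{Modulus $8$.} Here we need $\overline{p}(8n+7)\equiv (-1)^{n}r_3(8n+7)=0\pmod{64}$, because $r_3(8n+7)=0$. For this I would invoke the known congruence $\overline{p}(8n+7)\equiv0\pmod{64}$ (Hirschhorn–Sellers / Kim). Alternatively, one can derive it elementarily by 2-dissecting
\[
\frac{1}{\varphi(-q)}=\frac{\varphi^3(q^4)\cdots}{\varphi^4(-q^2)}
\]
repeatedly, via $\varphi(q)=\varphi(q^4)+2q\psi(q^8)$.
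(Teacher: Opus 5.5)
\textbf{Assessment.} Your mod-$8$ case is complete, your mod-$3$ and mod-$11$ cases are unexecuted but can be completed, and your mod-$5$ strategy fails outright.

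\textbf{What works.} The reduction $8\,\textup{OSOME}(n)=\overline{p}(n)+r_3(n)$ for odd $n$ is correct. Your modulus-$8$ argument is shorter than the paper's: since $r_3(8n+7)=0$, the claim is equivalent to the known congruence $\overline{p}(8n+7)\equiv0\pmod{64}$. The paper instead re-derives $\sum\textup{OSOME}(8n+7)q^n=8f_2^{22}/f_1^{23}$ by repeated $2$-dissection.

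\textbf{Moduli $3$ and $11$ are only sketched, but fixable.} For $11$, Treneer's framework does not give a fixed progression such as $88n+55$. What you need is the exact identity $\sum_{n\ge0}\overline{p}(8n+7)q^n=64f_2^{22}/f_1^{23}$, which is equivalent to the paper's \eqref{Gf8n+7} via $r_3(8n+7)=0$. Then apply $f_1^{22}\equiv f_{11}^2\pmod{11}$ and $p(11n+6)\equiv0\pmod{11}$. For $3$, your $3$-dissection plan does work. It gives $\sum\overline{p}(3n+1)q^n\equiv 2f_3f_6/f_1\equiv 2f_1^2f_2^3\pmod 3$. The $2$-dissections of $f_1^2/f_2$ and $f_2^4$ then show that the $q^{8n+6}$ coefficients of $f_1^2f_2^3$ vanish. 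This is exactly the paper's computation, because $\varphi^3(-q)\equiv\varphi(-q^3)\pmod 3$ gives $\textup{OSOME}(3n+1)\equiv-\overline{p}(3n+1)\pmod 3$. That same congruence is also the cleanest justification of $r_3(24n+19)\equiv0\pmod3$.

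\textbf{The genuine gap is the modulus $5$.} You plan to prove $\overline{p}(40n+27)\equiv0$ and $r_3(40n+27)\equiv0\pmod 5$ separately. Both claims are false already at $n=0$.
\begin{itemize}
\item Since $27\equiv3\pmod 8$, all three squares must be odd. The only possibilities are $25+1+1$ and $9+9+9$, so $r_3(27)=24+8=32\equiv2\pmod5$.
\item The recurrence $\overline{p}(n)=2\sum_{k\ge1}(-1)^{k+1}\overline{p}(n-k^2)$ gives $\overline{p}(27)=53408\equiv3\pmod5$.
\end{itemize}
The congruence $\textup{OSOME}(27)=(53408+32)/8=6680\equiv0\pmod5$ holds only because these two contributions cancel. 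The result $\overline{p}(40n+35)\equiv0\pmod5$ that you cite concerns a different residue class: $40n+35\equiv0$, whereas $40n+27\equiv2\pmod5$. It gives you nothing here.

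\textbf{What a proof must do instead.} Any argument has to handle $\frac{1}{\varphi(-q)}-\varphi^3(-q)$ as a single object modulo $5$. The paper does this in stages.
\begin{itemize}
\item It rewrites the generating function using the Baruah--Begum identity \eqref{Lemma4a}, so that the $q^{5n+2}$ component comes only from three eta-quotient terms.
\item It expresses those terms through the Rogers--Ramanujan functions via the $5$-dissection \eqref{c2}.
\item It carries out the $8$-dissection modulo $5$ using Watson's and Son's identities, Lemma \ref{lemma=0}, Lemma \ref{lemma2}, and the congruence of Lemma \ref{lemmaforreduction}. After this, everything cancels.
\end{itemize}
Your modular-forms fallback could in principle replace this. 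It would have to be applied to the combined form, after making it holomorphic of a fixed half-integral weight and level, and with the operators that isolate $40n+27$ tracked explicitly. As written, it is not carried out.
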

\begin{theorem}\label{mod3power}
For every nonnegative integer $n$ and $k$, we have
$$
 \textup{OSOME}\left(3^{2k+1}n+2\cdot3^{2k}\right)\equiv 0\pmod3,
$$
\end{theorem}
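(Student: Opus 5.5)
The plan is to work modulo $3$ directly from the closed form \eqref{closedform} of Theorem~\ref{CF}, reduce everything to the arithmetic of $r_2(m)$ (the number of representations of $m$ as a sum of two squares), and then treat every $k\ge0$ at once through the $3$-adic structure of $r_2$. \textbf{One warning up front:} the closed form already gives $\textup{OSOME}(2)=-1\not\equiv0\pmod 3$, i.e.\ $n=0$, $k=0$. So the step meant to give the statement exactly as worded for all $k\ge0$ will fail at $k=0$; I have not found a way to rescue it.

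\textbf{Reduction mod $3$.} Since $8\equiv -1\pmod 3$ and $\varphi^3(-q)\equiv\varphi(-q^3)\pmod 3$, we also have
$$\frac{1}{\varphi(-q)}=\frac{\varphi^2(-q)}{\varphi^3(-q)}\equiv\frac{\varphi^2(-q)}{\varphi(-q^3)}\pmod 3 .$$
Hence
$$\sum_{n\ge0}\textup{OSOME}(n)q^n\equiv \varphi(-q^3)-\frac{\varphi^2(-q)}{\varphi(-q^3)}\pmod 3 .$$
Now write $\varphi^2(-q)=\sum_{m\ge0}(-1)^m r_2(m)q^m$, using $(-1)^{a+b}=(-1)^{a^2+b^2}$. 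Extracting the terms with exponent $\equiv 2\pmod 3$ kills $\varphi(-q^3)$, and $1/\varphi(-q^3)$ is a series in $q^3$. It therefore suffices to show that
$$r_2\bigl(3^{2k}(3n+2)-3j\bigr)\equiv 0\pmod 3$$
in a suitably weighted sum over $j$, where the weights are the coefficients of $1/\varphi(-q^3)\equiv\varphi^2(-q^3)/\varphi(-q^9)$.

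Iterating this reduction (replace $q^3$ by $q$ and re-apply the same mod-$3$ identity) turns the problem into a statement about the $3$-dissection of $\varphi^2(-q)$ alone. The key arithmetic facts are:
\begin{itemize}
\item $r_2(m)=4\sum_{d\mid m}\chi_{-4}(d)$;
\item $r_2(9m)=r_2(m)$, because $\chi_{-4}(3)=-1$, so the local factor at $3$ is $1$ for even powers of $3$ and $0$ for odd powers;
\item $r_2(3m)=0$ when $3\nmid m$.
\end{itemize}

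\textbf{Induction on $k$.} I would show that the operator "take the terms $q^{9N}$, then $q\mapsto q^{1/9}$", applied to the mod-$3$ series above, returns the same series up to a sign. This is essentially the Hecke eigen-property of $\varphi^2$ at $p=3$, as the $r_2$ facts above indicate. It would give
$$\textup{OSOME}(9N+18)\equiv\pm\,\textup{OSOME}(N+2)\pmod 3$$
along the progression $N\equiv 0\pmod 3$, hence the reduction from $k$ to $k-1$. This step I expect to be routine once the dissection is written out.

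\textbf{Base case $k=0$.} The base case requires $\textup{OSOME}(3n+2)\equiv0\pmod 3$. By the reduction, this amounts to $r_2(m)\equiv 0\pmod 3$, weighted as above, for $m\equiv2\pmod 3$. This is the main obstacle, and it fails at $n=0$, because $r_2(2)=4$ and correspondingly $\textup{OSOME}(2)=-1$. What I would try first is to verify the base case at the first nontrivial level, $k=1$ (the progression $27n+18$). There the divisor-sum factor at $3$ contributes the needed vanishing: with $m=9(3n+2)-3j$, one checks that either $r_2(m)=0$ or $3\mid 4\sum_{d\mid m}\chi_{-4}(d)$. The induction above then yields every $k\ge1$.

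For $k=0$ the argument does not close. Unless the intended progression differs from the printed one, the statement as written for $k=0$ cannot be proved, because of the explicit counterexample $\textup{OSOME}(2)=-1$.
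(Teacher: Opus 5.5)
Your counterexample is right: $\textup{OSOME}(2)=-1$, and also $\textup{OSOME}(8)=11$. The paper's own proof reaches only $k\ge1$, because its induction $\textup{OSOME}(3^{2k}n)\equiv\textup{OSOME}(9n)$ is stated for positive $k$. Your mod-$3$ reduction is exactly the paper's first step, since $f_1^2/f_2=\varphi(-q)$ and $f_3^2/f_6=\varphi(-q^3)$.

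For $k\ge1$, however, both remaining steps of your plan are wrong as stated. Write $F$ for your series $\varphi(-q^3)-\varphi^2(-q)/\varphi(-q^3)$. Write $U_t$ for ``keep the terms $q^{tN}$ and replace $q^t$ by $q$'', so that $U_tF\equiv\sum_n\textup{OSOME}(tn)q^n \pmod 3$.

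\textbf{The induction step.} Your claim $U_9F\equiv\pm F$ is false. Its instance $N=0$ reads $\textup{OSOME}(18)\equiv\pm\textup{OSOME}(2)$, whereas $\textup{OSOME}(18)=489\equiv0$. On the progression $N\equiv0\pmod 3$, your relation is precisely the step from $k=1$ down to $k=0$, so it contradicts your own base case. The steps $k\to k-1$ with $k\ge2$ instead need $N=3^{2k-2}(3n+2)-2\equiv1\pmod 3$, which your relation does not cover.

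\textbf{The base case.} Your mechanism also fails. For $n=j=0$ you get $m=18$ with $r_2(18)=4$, which is neither $0$ nor divisible by $3$. The coefficient of $q^{18}$ in $\varphi^2(-q)/\varphi(-q^3)$ is $4-32+40=12$, coming from $j=0,3,6$. So the vanishing modulo $3$ is a cancellation, not termwise divisibility of $r_2$.

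\textbf{What is true.} The paper obtains the following from the $3$-dissection of $f_1^2/f_2$. It also follows from $U_3\varphi(-q)=\varphi(-q^3)$, from $U_3\varphi^2(-q)=\varphi^2(-q^3)$ (exactly your pair of $r_2$ facts), and from $1/\varphi(-q)\equiv\varphi^2(-q)/\varphi(-q^3)$. Modulo $3$,
\begin{align*}
U_3F&\equiv\varphi(-q)-\varphi^2(-q)\varphi(-q^3),\\
U_9F&\equiv\varphi(-q^3)-\varphi(-q)\varphi^2(-q^3),\\
U_{27}F&\equiv U_3F.
\end{align*}
The missing idea for the base case is the middle line. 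The exponents of $\varphi(-q)$ are squares, hence $\equiv0,1\pmod 3$, and the other factors are series in $q^3$. So $U_9F$ has no terms $q^{3N+2}$, which gives $\textup{OSOME}(27N+18)\equiv0\pmod 3$ directly.

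The correct self-similarity is the last line, $\textup{OSOME}(27n)\equiv\textup{OSOME}(3n)$. It is $U_9$ acting on $U_3F$, not on $F$, and the sign is $+$. Equivalently, $\textup{OSOME}(9x)\equiv\textup{OSOME}(x)$ for $3\mid x$; it fails at $x=1,2$. Taking $x=3^{2k-2}(3n+2)$ with $k\ge2$ reduces $k$ to $k-1$. This proves the statement for all $k\ge1$, which is the corrected form of the theorem.
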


Shomanov and Garvan \cite{ShomanovGarvan2025} proved the following infinite family of Hecke-like congruences modulo powers of 2 for the overpartition function $\overline{p}(n)$. 
\begin{theorem}\label{Shomanov-Garvan-thmA}
Let $\ell$ be an odd prime that satisfies $\ell\equiv-1\pmod{2^{\alpha+12}},$ for some $\alpha\geq0$.
Then, for every positive integer $m$ prime to $\ell$,
$$
 \overline{p}\left(2^\alpha\ell^3m\right)\equiv0\pmod{2^{\alpha+12}}.
$$
\end{theorem}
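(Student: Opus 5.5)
The statement is a result quoted from Shomanov and Garvan~\cite{ShomanovGarvan2025}, so the natural route is to reconstruct their modular-form argument. The approach is to build a half-integral weight modular form whose coefficients are $\overline{p}(2^\alpha n)$ modulo $2^{\alpha+12}$, and to show it behaves like a Hecke eigenform with eigenvalue $1+\ell^{2\lambda-1}$ for the Hecke operator $T(\ell^2)$. The hypothesis $\ell\equiv-1\pmod{2^{\alpha+12}}$ is then exactly what makes that eigenvalue vanish modulo $2^{\alpha+12}$.

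\emph{Step 1: pass to a holomorphic form.} By \eqref{overpartition-gf} we have $\sum\overline{p}(n)q^n=1/\varphi(-q)$. Since $\varphi(-q)=1+2T$ with $T\in\mathbb{Z}[[q]]$, we get $\varphi(-q)^{2^{N}}\equiv1\pmod{2^{N+1}}$. Hence
$$\frac{1}{\varphi(-q)}\equiv\varphi(-q)^{2^N-1}\pmod{2^{N+1}}.$$
For $N$ large enough (about $\alpha+11$) the right side is a holomorphic modular form of weight $\lambda+\tfrac12$, with $2\lambda+1=2^N-1$, on $\Gamma_0(16)$.

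\emph{Step 2: extract the $2^\alpha$-progression.} To reach $\overline{p}(2^\alpha n)$ I would apply $U_2$ repeatedly. One can either use the classical $2$-dissections $\varphi(q)=\varphi(q^4)+2q\psi(q^8)$ and $\sum\overline{p}(2n)q^n=\varphi^3(q)/\varphi^4(-q)$ (or equivalent theta identities), or note that $U_2$ preserves the space of forms of the given weight. The outcome is a form $F_\alpha$ of weight $\lambda+\tfrac12$ on $\Gamma_0(4)$ or $\Gamma_0(16)$ satisfying
$$F_\alpha\equiv\sum_n\overline{p}(2^\alpha n)q^n\pmod{2^{\alpha+12}}.$$

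\emph{Step 3: the Hecke argument.} Recall the action of $T(\ell^2)$ on coefficients:
$$a(\ell^2n)+\Big(\tfrac{(-1)^\lambda n}{\ell}\Big)\ell^{\lambda-1}a(n)+\ell^{2\lambda-1}a(n/\ell^2).$$
Suppose $F_\alpha\mid T(\ell^2)\equiv\mu F_\alpha\pmod{2^{\alpha+12}}$ with $\mu\equiv0$. Take $n=\ell m$ with $\ell\nmid m$. The Legendre symbol vanishes and $n/\ell^2$ is not an integer, so $a(\ell^3m)\equiv\mu\, a(\ell m)\equiv0$. This is the claim.

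For Cohen–Eisenstein-type series of weight $\lambda+\tfrac12$, the eigenvalue is $\mu=1+\ell^{2\lambda-1}$. Because $2\lambda-1$ is odd and $\ell\equiv-1\pmod{2^{\alpha+12}}$, this gives $\mu\equiv0$, which explains the shape of the hypothesis.

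\emph{Main obstacle.} The difficulty is showing that $F_\alpha$ is congruent modulo $2^{\alpha+12}$ to a combination of forms with eigenvalue $1+\ell^{2\lambda-1}$, i.e.\ that the cuspidal part is divisible by $2^{\alpha+12}$ or is itself annihilated. I would first write $\varphi(-q)^{2\lambda+1}$ as its genus (Eisenstein) part, given by the Siegel/Cohen formula for $r_{2\lambda+1}(n)$, plus a cusp form. I would then try to show the cusp part has coefficients divisible by a large power of $2$. This would use the fact that $(\varphi(-q)^{2^{N}}-1)/2^{N+1}$ is integral, reducing the question to a fixed-weight space where a Sturm-bound check applies. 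The constant $12$ presumably comes out of this $2$-adic bookkeeping, so that is where I expect the real work to lie.
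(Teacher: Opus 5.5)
\textbf{There is a genuine gap: your proposal assumes the substance of the theorem.}

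Your Step 3 is correct. It is also the only part of the argument the paper carries out. The paper does not prove this theorem; it imports it from \cite{ShomanovGarvan2025}. The only ingredient it uses is the relation \eqref{SG-congruence}, i.e.\ $\textup{T}(\overline{p},\alpha,\ell,n)\equiv0\pmod{2^{\alpha+12}}$ for every odd prime $\ell$. Putting $n=\ell m$ there gives $\ell^3\overline{p}(2^\alpha\ell^3m)\equiv(\ell^3+1)\overline{p}(2^\alpha\ell m)$, and $\ell\equiv-1\pmod{2^{\alpha+12}}$ finishes. This is the same specialization the paper performs for $\textup{OSOME}$ in the proof of Theorem \ref{mod2power}.

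The difficulty is what \eqref{SG-congruence} amounts to. With $\lambda+1=2^{N-1}$ and $N+1\ge\alpha+12$, we have $\ell^{\lambda+1}\equiv1\pmod{2^{N+1}}$, so $\ell^{\lambda-1}\equiv\ell^{-2}$ and $\ell^{2\lambda-1}\equiv\ell^{-3}$. After dividing by $\ell^3$, \eqref{SG-congruence} is therefore exactly your supposition $F_\alpha\mid T(\ell^2)\equiv(1+\ell^{2\lambda-1})F_\alpha\pmod{2^{\alpha+12}}$. Here the factor $\left(\frac{2^\alpha}{\ell}\right)$ in $\textup{T}$ is the character picked up from $U_2^\alpha$, and $(-1)^\lambda=-1$ because $\lambda$ is odd. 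So what you call the ``main obstacle'' is the theorem itself, and you only prove the routine last step.

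Your sketch for filling this step does not work as described.
\begin{itemize}
\item To reach the modulus $2^{\alpha+12}$ you need $N\ge\alpha+11$, so the weight $(2^N-1)/2$ grows without bound in $\alpha$. There is no single space in which a Sturm-bound check could settle all $\alpha$ at once.
\item Even for fixed $\alpha$, a Sturm check can only certify a congruence between two fixed forms. To get the eigen-congruence for the infinitely many $\ell$, you need a structural statement: an exact Eisenstein part plus a controlled cuspidal part.
\item Your proposed reduction via the integral series $(\varphi(-q)^{2^N}-1)/2^{N+1}$ goes in a circle. It only restates $\varphi(-q)^{2^N-1}\equiv1/\varphi(-q)\pmod{2^{N+1}}$. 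That moves you back to the weight $-\tfrac12$ weakly holomorphic form $\eta(2z)/\eta(z)^2$, where Sturm bounds do not apply, not to a fixed holomorphic weight.
\item The claim that the cuspidal projection of $F_\alpha$ is divisible by $2^{\alpha+12}$ is stronger than needed. It would suffice for it to be congruent to something with eigenvalues $1+\ell^{2\lambda-1}$. You give no mechanism for either version.
\item You would also need $2$-integrality of the Eisenstein projection, whose coefficients involve ratios like $L(1-\lambda,\chi_D)/\zeta(1-2\lambda)$.
\end{itemize}
A uniform-in-$\alpha$ $2$-adic argument is exactly what \cite{ShomanovGarvan2025} supplies, and it is what is missing here.

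\textbf{Minor error.} The identity $\sum\overline{p}(2n)q^n=\varphi^3(q)/\varphi^4(-q)$ is false: the coefficient of $q$ on the right is $14$, while $\overline{p}(2)=4$. From $1/\varphi(-q)=\varphi(q)/\varphi^2(-q^2)$ and $\varphi(q)=\varphi(q^4)+2q\psi(q^8)$, the correct identity is $\sum_{n\ge0}\overline{p}(2n)q^n=\varphi(q^2)/\varphi^2(-q)$.
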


In a similar fashion, we obtain the following infinite family of congruences modulo powers of $2$ for $\textup{OSOME}(n)$.
\begin{theorem}\label{mod2power}
Let $\alpha\geq0$, and let $\ell$ be an odd prime that satisfies 
$$
 \ell\equiv-1\pmod{2^{\alpha+12}}.
$$
Then, for every positive integer $m$ prime to $\ell$,
$$
 \textup{OSOME}\left(2^{\alpha}\ell^3m\right)\equiv0\pmod{2^{\alpha+9}}.
$$
\end{theorem}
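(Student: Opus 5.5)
From \eqref{closedform} we have
$$
 8\sum_{n\ge0}\textup{OSOME}(n)q^n=\sum_{n\ge0}\overline{p}(n)q^n-\varphi^3(-q),
$$
so for every $n$,
$$
 8\,\textup{OSOME}(n)=\overline{p}(n)-(-1)^n r_3(n),
$$
where $r_3(n)$ is the number of representations of $n$ as a sum of three squares. Take $n=2^\alpha\ell^3 m$ with $\ell\nmid m$ and $\ell\equiv-1\pmod{2^{\alpha+12}}$. Theorem \ref{Shomanov-Garvan-thmA} gives $\overline{p}(n)\equiv0\pmod{2^{\alpha+12}}$. It therefore suffices to show
$$
 r_3\left(2^\alpha\ell^3m\right)\equiv0\pmod{2^{\alpha+12}}.
$$
Dividing $8\,\textup{OSOME}(n)$ by $8$ then yields the claimed modulus $2^{\alpha+9}$.

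To prove this, I will use the Hecke multiplicativity of $r_3$ at odd primes. The form $\varphi^3(q)$ is a weight $3/2$ eigenform of $T(\ell^2)$ with eigenvalue $\ell+1$; equivalently, this is the Gauss/Hurwitz class number formula. Concretely, for $N$ with $\ell^2\nmid N$,
$$
 r_3(\ell^2N)=\left(\ell+1-\left(\frac{-N}{\ell}\right)\right)r_3(N).
$$
Apply this to $N=\ell m$. Since $\ell\mid N$, the Legendre symbol vanishes, giving
$$
 r_3(\ell^3m)=(\ell+1)\,r_3(\ell m).
$$
The same recursion holds with $2^\alpha m$ in place of $m$, because the power of $2$ is prime to $\ell$. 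Hence
$$
 r_3\left(2^\alpha\ell^3m\right)=(\ell+1)\,r_3\left(2^\alpha\ell m\right).
$$
The hypothesis $\ell\equiv-1\pmod{2^{\alpha+12}}$ gives $2^{\alpha+12}\mid \ell+1$, so the required divisibility follows at once, with no control needed on $r_3(2^\alpha\ell m)$.

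The main obstacle is to justify the $T(\ell^2)$ relation for $r_3$ with full generality in $N$, in particular when $\ell\mid N$. I will cite the classical formula: $r_3$ is expressed through Hurwitz class numbers, and these satisfy the standard relation
$$
 H(\ell^2N)=\left(\ell+1-\left(\frac{-N}{\ell}\right)\right)H(N)-\ell H(N/\ell^2).
$$
The last term vanishes when $\ell^2\nmid N$. An alternative justification is that $\varphi^3$ spans the relevant one-dimensional Kohnen-type space on $\Gamma_0(4)$ in weight $3/2$, where $T(\ell^2)$ acts by $\ell+1$. The Hecke formula then reads
$$
 r_3(\ell^2N)+\left(\frac{-N}{\ell}\right)r_3(N)+\ell\, r_3(N/\ell^2)=(\ell+1)\,r_3(N),
$$
which reduces to the relation above when $\ell^2\nmid N$. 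Finally, the sign $(-1)^n$ is irrelevant to the divisibility, so the theorem follows.
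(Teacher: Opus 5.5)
Your proof is correct, and it takes a shorter route than the paper. The two arguments share the same ingredients but combine them differently.

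\textbf{The paper's route.} It does not invoke Theorem \ref{Shomanov-Garvan-thmA} directly. Instead it works with the operator $\textup{T}(g,\alpha,\ell,n)$ of \eqref{T-definition}:
\begin{itemize}
\item It quotes the stronger Shomanov--Garvan congruence \eqref{SG-congruence}, valid for every odd prime $\ell$.
\item It computes $\textup{T}(c,\alpha,\ell,n)$ exactly from the Hecke relation (Lemma \ref{T-c-divisibility}).
\item It deduces the operator congruence $\textup{T}(\textup{OSOME},\alpha,\ell,n)\equiv0\pmod{2^{\alpha+9}}$ under the weaker hypothesis $\ell^2\equiv1\pmod{2^{\alpha+12}}$ (Lemma \ref{OSOME-power-two-transfer}).
\item Only then does it set $n=\ell m$ and use $\ell^3+1\equiv0\pmod{2^{\alpha+12}}$.
\end{itemize}

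\textbf{Your route.} You specialize first, on both sides of $8\,\textup{OSOME}=\overline{p}-c$. Theorem \ref{Shomanov-Garvan-thmA} handles $\overline{p}(2^\alpha\ell^3m)$. The Hecke relation at $N=2^\alpha\ell m$, where both the character term and the $N/\ell^2$ term vanish, gives the exact identity $c(2^\alpha\ell^3m)=(\ell+1)\,c(2^\alpha\ell m)$. This is the same template the paper itself uses for Theorem \ref{general-cubic}.

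\textbf{Comparison.} Your version avoids the bookkeeping of Lemma \ref{T-c-divisibility} and needs only the pointwise form of the Shomanov--Garvan result. The paper's version yields the intermediate Hecke-type congruence for $\textup{OSOME}$ for all odd primes with $\ell^2\equiv1\pmod{2^{\alpha+12}}$, which is of some independent interest.

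\textbf{Two small remarks.} You do not need Hurwitz class numbers to justify the relation for all $N$: Proposition \ref{theta-Hecke-proposition} (equivalently Lemma \ref{c-Hecke-relation}) already gives it. Also, calling the space ``Kohnen-type'' is a misnomer. $\varphi^3$ is not in Kohnen's plus space, since $r_3(1)=6\neq0$. The relevant fact is simply that $M_{3/2}\bigl(\widetilde{\Gamma}_0(4)\bigr)$ is one-dimensional.
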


Ryan, Sirolli, Villegas-Morales, and Zheng \cite{RyanSirolliVillegasZheng2024} gave infinite families of congruences modulo $m = 3,5,7$, and $11$ for the overpartition function $\overline{p}(n)$ as stated in the following two theorems. 

\begin{theorem}\label{Ryan-overpartition-lemma}For $m\in\{3,5,7,11\}$, define
\begin{equation}\label{km-definition}
 k_m:=
 \begin{cases}
  m+2,&m=3,\\
  m-2,&m>3.
 \end{cases}
\end{equation}
If $\ell$ is an odd prime such that $\ell^{k_m-2}\equiv-1\pmod m$, then for every positive integer $n$ prime to $\ell$, one has 
$$\overline{p}(m\ell^3n)\equiv0\pmod m.$$
\end{theorem}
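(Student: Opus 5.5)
The plan is to realize the generating function of $\overline{p}(mn)$, modulo $m$, as a Hecke eigenform (or a combination of eigenforms with a common eigenvalue) of half-integral weight $k_m-\tfrac12$ on $\Gamma_0(16)$. The congruence should then follow from the standard multiplicativity relation for the Hecke operator $T(\ell^2)$. Here $k_m$ is as in \eqref{km-definition}.

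\textbf{Step 1 (reduction modulo $m$).} By \eqref{overpartition-gf} and $\varphi(-q)^m\equiv\varphi(-q^m)\pmod m$, we have
$$
\sum_{n\ge0}\overline{p}(n)q^n=\frac{1}{\varphi(-q)}\equiv\frac{\varphi(-q)^{m-1}}{\varphi(-q^m)}\pmod m .
$$
Applying $U_m$ then gives
$$
\sum_{n\ge0}\overline{p}(mn)q^n\equiv\frac{\varphi(-q)^{m-1}\mid U_m}{\varphi(-q)}\pmod m .
$$
Multiplying numerator and denominator by a suitable power of $\varphi(-q)^m\equiv\varphi(-q^m)$, or by a power of the Hasse-type form $\varphi^{2(m-1)}\equiv 1$ where available, I would clear the denominator. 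The aim is to write
$$
\sum_{n\ge0}\overline{p}(mn)q^n\equiv F(q)\pmod m ,
$$
with $F$ a holomorphic modular form of weight $k_m-\tfrac12$ on $\Gamma_0(16)$ and integral coefficients. The weights work out because $\varphi(-q)^{m-1}\mid U_m$ has weight $(m-1)/2$; adjusting by multiples of $(m-1)/2$ (that is, powers of the mod-$m$ trivial form) should land at $k_m-\tfrac12$. This explains $k_3=5$, a larger shift being needed for $m=3$, against $k_m=m-2$ otherwise.

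\textbf{Step 2 (the Hecke relation).} Write $F=\sum a(n)q^n$ and suppose that modulo $m$ it is an eigenform of $T(\ell^2)$ with eigenvalue $\lambda_\ell$. For weight $\lambda+\tfrac12$ with $\lambda=k_m-1$, the action of $T(\ell^2)$ gives
$$
a(\ell^2n)+\Bigl(\tfrac{(-1)^\lambda n}{\ell}\Bigr)\ell^{\lambda-1}a(n)+\ell^{2\lambda-1}a(n/\ell^2)\equiv\lambda_\ell\, a(n).
$$
Replacing $n$ by $\ell n$ with $\ell\nmid n$ kills both the Legendre symbol term and the $a(n/\ell^2)$ term, since $\ell^2\nmid \ell n$. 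Hence
$$
a(\ell^3n)\equiv\lambda_\ell\,a(\ell n)\pmod m ,
$$
and it suffices to show $\lambda_\ell\equiv0\pmod m$.

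\textbf{Step 3 (main obstacle).} The main obstacle is Step 3: identifying the relevant space modulo $m$ and computing $\lambda_\ell$. I would compute Sturm bounds to pin down $F$ modulo $m$ in a basis of $M_{k_m-1/2}(\Gamma_0(16))$. I expect this space to be spanned, modulo $m$, by forms whose $T(\ell^2)$-eigenvalues have the shape $\lambda_\ell=\pm(\ell^{k_m-2}+1)$ up to a unit, coming from Eisenstein or CM-type (theta) components. The hypothesis $\ell^{k_m-2}\equiv-1\pmod m$ should then give $\lambda_\ell\equiv0$. If a genuinely non-CM cusp component survives, I would instead try to show that its coefficient in $F$ vanishes modulo $m$, by a finite check below the Sturm bound. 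Since this is the content of Ryan, Sirolli, Villegas-Morales and Zheng \cite{RyanSirolliVillegasZheng2024}, in the paper one may simply cite them.
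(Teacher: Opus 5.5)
The paper gives no proof of this statement. It is imported from \cite{RyanSirolliVillegasZheng2024} and used as a black box, so your closing sentence is exactly the paper's treatment. Your Step~2 (replacing $n$ by $\ell n$ to kill the symbol term and the $a(n/\ell^2)$ term) is the same device the paper applies to $c(n)$ in the proof of Theorem~\ref{general-cubic}. Read as a proof, however, your sketch has genuine gaps.

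First, the weight is wrong. The relevant space is $M_{k_m/2}\bigl(\widetilde{\Gamma}_0(4)\bigr)$, not weight $k_m-\tfrac12$. The hypothesis $\ell^{k_m-2}\equiv-1$ matches \eqref{half-integral-Hecke} with $k=k_m$, as do the factor $\ell^{(k_m-3)/2}$ and the symbol $\left(\frac{(-1)^{(k_m-1)/2}n}{\ell}\right)$ in the companion theorem. With your $\lambda=k_m-1$ the Eisenstein eigenvalue is $1+\ell^{2k_m-3}$, which need not vanish: for $m=7$ and $\ell\equiv3\pmod 7$ one has $\ell^3\equiv-1$, but $1+\ell^7\equiv4\pmod 7$. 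So the eigenvalue you expect in Step~3 is inconsistent with the weight you fix in Step~1.

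Second, $\varphi^{2(m-1)}\not\equiv1\pmod m$, since its coefficient of $q$ is $4(m-1)$. In any case, rewriting with forms congruent to $1$ does not address the real issue. Because $\varphi$ vanishes at the cusp $1/2$, you must show that $\varphi^{m-1}\mid U_m$ is divisible by $\varphi$ modulo $m$.

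Here is how those steps can be carried out, twisting by $(-1)^n$ so that everything lives on $\Gamma_0(4)$ with $\varphi(q)$:
\begin{itemize}
\item For $m=3$: $\varphi(q)^2\mid U_3=\varphi(q^3)^2\equiv\varphi^6\pmod 3$, which gives $\varphi^5$. This, not a Hasse-invariant shift, is why $k_3=5$.
\item For $m=5$: Jacobi's formula $r_4(n)=8\sum_{d\mid n,\,4\nmid d}d$ gives $\varphi^4\mid U_5\equiv\varphi^4\pmod 5$, so the form is $\varphi^3$.
\item For $m=7,11$: $U_m\equiv T_m$ in weight $(m-1)/2\ge3$ with character $\chi_{-4}$. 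The cusp $1/2$ is irregular there, so $\varphi^{m-1}\mid T_m$ vanishes at it to order at least $1/2>1/4=\mathrm{ord}_{1/2}\varphi$, and the quotient is holomorphic.
\end{itemize}

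Then comes Step~3, which is the actual content and which you leave as an expectation. $M_{3/2}$ and $M_{5/2}$ on $\widetilde{\Gamma}_0(4)$ contain no cusp forms, and $T(\ell^2)$ acts on them by $1+\ell^{k-2}$; this settles $m=3,5,7$. For $m=11$, however, $M_{9/2}\bigl(\widetilde{\Gamma}_0(4)\bigr)$ contains a cusp form whose $T(\ell^2)$-eigenvalues are those of the weight-$8$ newform $\eta^8(z)\eta^8(2z)$ on $\Gamma_0(2)$. For example, its eigenvalue at $\ell=3$ is $12$, while $1+3^7\equiv10\pmod{11}$. So for $m=11$ you must verify, by a Sturm-bound computation, that the cuspidal component vanishes modulo $11$. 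Without these steps the proposal is a plan rather than a proof.
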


\begin{theorem}
Let $m$ and $k_m$ be as given in the above theorem. Let $\ell$ be an odd prime such that
\begin{equation}\label{Ryan-square-condition}
 \ell^{k_m-2}\equiv-1+\varepsilon_{m,\ell}\ell^{(k_m-3)/2}\pmod m,
\end{equation}
where $\varepsilon_{m,\ell}\in\{\pm1\}$. Then for every positive integer $n$ prime to $\ell$ such that
\begin{equation}\label{Ryan-symbol-condition}
 \left(\frac{(-1)^{(k_m-1)/2}n}{\ell}\right)=\varepsilon_{m,\ell},
\end{equation}
one has
$$
 \overline{p}(m\ell^2n)\equiv0\pmod m.
$$
\end{theorem}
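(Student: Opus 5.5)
The plan is to reduce the statement to a Hecke-operator identity for a power of $\varphi(-q)$. The key steps are an explicit congruence for $\overline{p}(mn)$, followed by the action of the half-integral weight Hecke operator $T(\ell^2)$ on a theta power.

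\textbf{Step 1: reduce to a theta power.} By \eqref{overpartition-gf} and $\varphi(-q)^m\equiv\varphi(-q^m)\pmod m$, we have
$$\frac{1}{\varphi(-q)}\equiv\frac{\varphi(-q)^{m-1}}{\varphi(-q^m)}\pmod m.$$
Applying $U_m$ gives
$$\sum_{n\ge0}\overline{p}(mn)q^n\equiv\frac{U_m\bigl(\varphi(-q)^{m-1}\bigr)}{\varphi(-q)}\pmod m.$$
I would then establish, for each $m\in\{3,5,7,11\}$, the congruence
$$\sum_{n\ge0}\overline{p}(mn)q^n\equiv c_m\,\varphi(-q)^{k_m}\pmod m,\qquad c_m\in\{\pm1\},$$
with $k_m$ as in \eqref{km-definition}. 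For $m=5$ this is the known statement $\overline{p}(5n)\equiv(-1)^n r_3(n)\pmod 5$; for $m=3$ it is the Lovejoy--Osburn relation with $r_5$.

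The general argument is modular. Both sides are modular forms of level $4$ (after multiplying by a suitable power of $\varphi(-q)$ or a cusp form to clear the denominator), and their weights agree modulo $(m-1)/2$, which one reconciles by multiplying by an Eisenstein series $E_{m-1}\equiv1\pmod m$. Sturm's bound then reduces the congruence to checking finitely many coefficients. \emph{This step is where I expect the main obstacle}, especially for $m=11$, where $k_m=9$ and the Sturm bound is larger. I would first try the direct route of verifying enough coefficients numerically.

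\textbf{Step 2: apply $T(\ell^2)$.} Write $\varphi(-q)^{k}=\sum r_k(n)(-q)^n$ with $k=k_m$ odd. For $k\in\{3,5,7\}$, $\varphi^k$ spans the relevant space modulo cusp forms in a way that makes it a $T(\ell^2)$-eigenform with eigenvalue $1+\ell^{k-2}$. For $k=9$, I would check that this holds modulo $11$, again via Sturm's bound, or use the fact that the cuspidal part vanishes modulo $11$. The Shimura--Hecke formula then gives
$$r_k(\ell^2n)+\left(\frac{(-1)^{(k-1)/2}n}{\ell}\right)\ell^{(k-3)/2}r_k(n)+\ell^{k-2}r_k(n/\ell^2)=(1+\ell^{k-2})\,r_k(n).$$
The sign twist $(-1)^n$ commutes with this, since $\ell$ is odd and so $\ell^2n\equiv n\pmod 2$.

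\textbf{Step 3: conclude.} For $\ell\nmid n$ the middle term on the right of the Hecke relation involves $r_k(n/\ell^2)=0$, so
$$r_k(\ell^2n)=\Bigl(1+\ell^{k-2}-\left(\tfrac{(-1)^{(k-1)/2}n}{\ell}\right)\ell^{(k-3)/2}\Bigr)r_k(n).$$
Condition \eqref{Ryan-square-condition} says $1+\ell^{k-2}\equiv\varepsilon_{m,\ell}\ell^{(k-3)/2}\pmod m$. Condition \eqref{Ryan-symbol-condition} says the Legendre symbol equals $\varepsilon_{m,\ell}$. Hence the factor vanishes modulo $m$, and Step 1 gives $\overline{p}(m\ell^2n)\equiv c_m(-1)^n r_k(\ell^2n)\equiv0\pmod m$.

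As a consistency check, the same relation applied to $n\ell$ with $\ell\nmid n$ yields $r_k(\ell^3n)\equiv(1+\ell^{k-2})r_k(\ell n)$. This vanishes modulo $m$ under $\ell^{k-2}\equiv-1$, which recovers Theorem \ref{Ryan-overpartition-lemma}.
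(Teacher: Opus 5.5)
The paper gives no proof of this statement. It quotes it from Ryan, Sirolli, Villegas-Morales and Zheng and uses it as a black box in the proof of Theorem~\ref{general-square}. Your architecture is the natural one, and for $m=3,5$ it works as written: $U_3(\varphi^2)=\varphi^2(q^3)\equiv\varphi^6\pmod 3$ and $U_5(\varphi^4)\equiv\varphi^4\pmod 5$.

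\textbf{Step 1 fails for $m=7$ and $m=11$.} No Sturm-bound verification can establish it there. The constant terms force $c_m=1$, and already the coefficients of $q$ disagree:
\begin{itemize}
\item $\overline{p}(7)=64\equiv1\pmod 7$, whereas $\varphi(-q)^5=1-10q+\cdots$ and $-10\equiv4\pmod 7$;
\item $\overline{p}(11)=344\equiv3\pmod{11}$, whereas $\varphi(-q)^9=1-18q+\cdots$ and $-18\equiv4\pmod{11}$.
\end{itemize}
The reason is that modulo $m$ the operator $U_m$ acts on $\varphi^{m-1}$ as the integral-weight $T_m$. For $m=7$, $\varphi^6$ is a combination of the two Eisenstein series of $M_3(\Gamma_0(4),\chi_{-4})$. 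Their $T_7$-eigenvalues are $7^2+\chi_{-4}(7)\equiv-1$ and $1+\chi_{-4}(7)\cdot7^2\equiv+1 \pmod 7$. Hence $U_7(\varphi^6)\not\equiv\pm\varphi^6$. The same happens for $m=11$.

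\textbf{What the argument actually needs.} Work with $F_m:=U_m(\varphi^{m-1})/\varphi$ itself. Show that it lies in $M_{k_m/2}(\widetilde{\Gamma}_0(4))$, and prove that it is a $T(\ell^2)$-eigenform modulo $m$ with eigenvalue $1+\ell^{k_m-2}$.

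For $m=7$ this repair is painless. $M_{5/2}(\widetilde{\Gamma}_0(4))$ is spanned by Eisenstein series that all have eigenvalue $1+\ell^3$. So your Steps 2 and 3 go through with $\varphi^5$ replaced by $F_7$.

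For $m=11$ this is the real content of the result, and it is missing from your plan. $M_{9/2}(\widetilde{\Gamma}_0(4))$ is three-dimensional and contains a cusp form. Its $T(\ell^2)$-eigenvalues are the Fourier coefficients of the level-$2$, weight-$8$ newform
\[
\eta^8(z)\eta^8(2z)=q-8q^2+12q^3+\cdots,
\]
and $12\not\equiv 1+3^7\equiv 10\pmod{11}$. So you must prove that the cuspidal component of $F_{11}$ vanishes modulo $11$, for instance by a Sturm-bound computation. Your remark about the cuspidal part of $\varphi^9$ concerns the wrong form.
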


In the following two theorems, we present the corresponding results for $\textup{OSOME}(n)$. 
\begin{theorem}\label{general-cubic}
Let $m\in\{3,5,7,11\}$ and $\ell$ be an odd prime such that $\ell\equiv-1\pmod m$.
Then for any positive integer $n$ prime to $\ell$, one has
$$
 \textup{OSOME}(m\ell^3n)\equiv0\pmod m.
$$
\end{theorem}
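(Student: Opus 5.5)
From Theorem~\ref{CF}, $8\,\textup{OSOME}(n)=\overline{p}(n)-r_3(n)$, where $\sum_{n\ge0} r_3(n)q^n=\varphi^3(-q)$. Hence $r_3(n)=(-1)^n R_3(n)$, with $R_3(n)$ the number of representations of $n$ as a sum of three squares. Since $m$ is odd, $8$ is invertible modulo $m$, so it suffices to show that
\[
\overline{p}(m\ell^3 n)\equiv 0 \pmod m \qquad\text{and}\qquad R_3(m\ell^3 n)\equiv 0\pmod m .
\]

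\textbf{The overpartition term.} We invoke Theorem~\ref{Ryan-overpartition-lemma}, which requires $\ell^{k_m-2}\equiv-1\pmod m$.
\begin{itemize}
\item For $m=3$ we have $k_m-2=3$, and $\ell\equiv-1$ gives $\ell^3\equiv-1\pmod 3$.
\item For $m=5,7,11$ we have $k_m-2=m-4$, which is odd ($1,3,7$), so $\ell\equiv-1$ gives $\ell^{m-4}\equiv-1\pmod m$.
\end{itemize}
Thus in every case $\overline{p}(m\ell^3n)\equiv0\pmod m$ for $n$ prime to $\ell$.

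\textbf{The three-squares term.} We use the multiplicative behaviour of $R_3$ at odd primes, coming from the Hecke action on the weight $3/2$ form $\theta^3$ (Gauss's class-number formula / Hurwitz). For an odd prime $\ell$ and $N$ with $\ell^2\nmid N$, set $\chi(N)=\bigl(\frac{-N}{\ell}\bigr)$. The standard relation
\[
R_3(\ell^2N)=\bigl(\ell+1-\chi(N)\bigr)R_3(N)-\ell R_3(N/\ell^2),
\]
iterated, gives the closed form
\[
R_3(\ell^{2k}N)=\Bigl(\sigma(\ell^k)-\chi(N)\,\sigma(\ell^{k-1})\Bigr)R_3(N).
\]
Write $N_0=m\ell n$, where $\ell\nmid mn$ (note $\ell\neq m$ since $\ell\equiv-1\pmod m$). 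Then $\ell\,\|\,N_0$, so $\chi(N_0)=0$. Taking $k=1$ gives
\[
R_3(m\ell^3n)=R_3(\ell^2N_0)=(\ell+1)\,R_3(m\ell n).
\]
Since $\ell\equiv-1\pmod m$, the factor $\ell+1$ is divisible by $m$, so $R_3(m\ell^3n)\equiv0\pmod m$. Combining the two terms gives $\textup{OSOME}(m\ell^3n)\equiv0\pmod m$.

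The main obstacle is the three-squares step. One must confirm that the Hecke relation with $\chi(N)=\bigl(\frac{-N}{\ell}\bigr)$ holds for arbitrary $N$, including the $4^a$ and $8$-adic parts, and in particular that the case $\ell\,\|\,N$ gives exactly the factor $\ell+1$. I would justify this by citing the eigenform property: $\theta^3$ is a Hecke eigenform for $T(\ell^2)$ with eigenvalue $\ell+1$.
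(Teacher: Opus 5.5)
Your proposal is correct and follows the paper's proof: you split $8\,\textup{OSOME}(n)=\overline{p}(n)-c(n)$, dispose of $\overline{p}(m\ell^3n)$ via the Ryan--Sirolli--Villegas-Morales--Zheng theorem using that $k_m-2$ is odd, and dispose of the theta-cube term by applying the Hecke relation at $m\ell n$, where the Legendre symbol and the divided-index term vanish and leave the factor $\ell+1\equiv0\pmod m$. Your iterated closed form for $R_3(\ell^{2k}N)$ is not needed, since only $k=1$ is used, and your concern about general $N$ is settled as in the paper, because $\varphi^3(q)\big|T_{3/2,4}(\ell^2)=(\ell+1)\varphi^3(q)$ holds coefficientwise for every $n$.
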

\begin{theorem}\label{general-square} Let $m\in\{3,5,7,11\}$ and $k_m$ be as defined in \eqref{km-definition}. Let $\ell$ be an odd prime that satisfies 
\eqref{Ryan-square-condition}. Let $n$ be a positive integer prime to $\ell$ that also satisfies \eqref{Ryan-symbol-condition} and \begin{equation}\label{mod-eight-condition}
 n\equiv7m\pmod8.
\end{equation}
Then, for every $j\geq0$,
$$
 \textup{OSOME}(m4^j\ell^2n)\equiv0\pmod m.
$$
\end{theorem}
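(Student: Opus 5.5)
The plan is to reduce the statement to the Ryan--Sirolli--Villegas-Morales--Zheng square-class congruence for $\overline{p}$ quoted just above, using Theorem~\ref{CF} to dispose of the theta-function part. By \eqref{closedform},
$$
8\sum_{N\ge0}\textup{OSOME}(N)q^N=\frac{1}{\varphi(-q)}-\varphi^3(-q).
$$
Write $\varphi^3(q)=\sum_{N\ge0}r_3(N)q^N$, where $r_3(N)$ is the number of representations of $N$ as a sum of three squares. Then $\varphi^3(-q)=\sum_{N\ge0}(-1)^Nr_3(N)q^N$, and by \eqref{overpartition-gf} we get, for every $N\ge0$,
$$
8\,\textup{OSOME}(N)=\overline{p}(N)-(-1)^N r_3(N).
$$

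Next I take $N=m4^j\ell^2n$ and show that $r_3(N)=0$. Since $m$ and $\ell$ are odd, $\ell^2\equiv m^2\equiv1\pmod 8$. Condition \eqref{mod-eight-condition} then gives $mn\equiv 7m^2\equiv7\pmod8$, so $m\ell^2n\equiv7\pmod8$. Hence $N=4^j(8k+7)$ for some $k\ge0$. By the Gauss--Legendre three-square theorem, $N$ is not a sum of three squares, so $r_3(N)=0$. Therefore
$$
8\,\textup{OSOME}(m4^j\ell^2n)=\overline{p}(m4^j\ell^2n).
$$

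It remains to show that $\overline{p}(m\ell^2n')\equiv0\pmod m$ with $n'=4^jn$, by applying the quoted square-class theorem. Its hypotheses hold as follows:
\begin{itemize}
\item $\ell$ satisfies \eqref{Ryan-square-condition} by assumption.
\item $n'$ is prime to $\ell$, because $n$ is and $\ell$ is odd.
\item $n'$ satisfies \eqref{Ryan-symbol-condition}, because $4^j$ is a nonzero square modulo $\ell$, so
$$
\left(\frac{(-1)^{(k_m-1)/2}4^jn}{\ell}\right)=\left(\frac{(-1)^{(k_m-1)/2}n}{\ell}\right)=\varepsilon_{m,\ell}.
$$
\end{itemize}
The theorem thus gives $m\mid\overline{p}(m\ell^2n')=8\,\textup{OSOME}(m4^j\ell^2n)$. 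Since $m$ is odd, $8$ is invertible modulo $m$, and the claimed congruence follows.

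The only step that needs care is the vanishing of the $r_3$ term. It is exactly why \eqref{mod-eight-condition} appears, and it uses that $4^j$ multiplies a number $\equiv7\pmod8$. Everything else is a direct transfer from the overpartition result.
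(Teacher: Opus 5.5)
Your proposal is correct and follows the paper's proof essentially step for step. Both arguments use the relation $8\,\textup{OSOME}(N)=\overline{p}(N)-c(N)$ from Theorem~\ref{CF}, apply the Ryan--Sirolli--Villegas-Morales--Zheng square-class congruence to $4^jn$ (whose Legendre symbol is unchanged), and use Legendre's three-square theorem with $m\ell^2n\equiv7\pmod 8$ to force $c(m4^j\ell^2n)=0$; you also state explicitly that $4^jn$ stays prime to $\ell$, which the paper leaves implicit.
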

We present some preliminary results and some lemmas in the next section, whereas the final sections are devoted to proving the theorems.
\section{Preliminaries}


\subsection{Rogers -Ramanujan functions}
The Rogers–Ramanujan identities are
\begin{align}
G(q):&=\sum_{n=0}^{\infty}\frac{q^{n^2}}{(q;q)_n}=\frac{1}{(q;\;q^5)_\infty(q^4;\;q^5)_\infty}\label{Gq}\\
\intertext{and}    H(q):&=\sum_{n=0}^{\infty}\frac{q^{n^2+n}}{(q;q)_n}=\frac{1}{(q^2;\;q^5)_\infty(q^3;\;q^5)_\infty},\label{Hq}\end{align}
where $G(q)$ and $H(q)$ are called the Rogers--Ramanujan functions. It is to be noted that
\begin{align}\label{ghf5byf1}
G(q)H(q)=\frac{1}{(q;\;q^5)_\infty(q^2;\;q^5)_\infty(q^3;\;q^5)_\infty(q^4;\;q^5)_\infty}=\frac{f_5}{f_1}.
    \end{align}
Furthermore, by manipulating $q$-products, it is easy to see that
$$G(-q) = \dfrac{G(q^2)H^2(q^2)}{G(q)H(q^4)}\quad \textup{and}\quad H(-q) = \dfrac{H(q^2)G^2(q^2)}{H(q)G(q^4)}.$$ 
We will use the above identities without further reference to them. 
     
We will also make use of the following identities due to Son \cite{son}.
\begin{lemma}\label{sonlemma}
We have
    \begin{align}
        \varphi(q)+\varphi(q^{5})&=2f_2G(q)G(q^4)\label{SON1},\\
        \psi(q^2)+q\psi(q^{10})
        &=\frac{f_{10}G(q)G(q^2)}{H(q)G(q^4)}
        =f_2G(q)H(-q)\notag\\
        &=\frac{f_5f_{10}}{f_1}\frac{G(q^2)}{H^2(q)G(q^4)},\label{SON2}        
    \end{align}
   where 
   \begin{align*}
\psi(q):=\sum_{n=0}^{\infty}q^{n(n+1)/2}.
\end{align*}
\end{lemma}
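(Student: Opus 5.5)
The lemma is due to Son, so in the paper one may simply cite \cite{son}. If I wanted a self-contained argument, I would split it into the \emph{product} equalities and the two genuine \emph{theta-sum} identities.

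The product equalities in \eqref{SON2} are routine. Using $H(-q)=H(q^2)G^2(q^2)/(H(q)G(q^4))$, we get
$$f_2G(q)H(-q)=\frac{f_2G(q)G(q^2)\cdot G(q^2)H(q^2)}{H(q)G(q^4)},$$
and \eqref{ghf5byf1} with $q\to q^2$ gives $f_2G(q^2)H(q^2)=f_{10}$. This yields the first equality in \eqref{SON2}. For the last expression I would write $f_{10}=f_2G(q^2)H(q^2)$ and $f_5/f_1=G(q)H(q)$, then substitute and cancel. Only the infinite-product forms \eqref{Gq}, \eqref{Hq} and \eqref{ghf5byf1} are needed.

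For \eqref{SON1} I would use the two Rogers--Ramanujan-type identities from Ramanujan's list of forty identities:
$$G(q)G(q^4)+qH(q)H(q^4)=\frac{\varphi(q)}{f_2},\qquad G(q)G(q^4)-qH(q)H(q^4)=\frac{\varphi(q^5)}{f_2}.$$
Adding them gives \eqref{SON1} at once. If a proof of these two identities is also wanted, I would argue as follows:
\begin{enumerate}
\item Write $G(q)G(q^4)$ and $qH(q)H(q^4)$ as quotients of theta products via \eqref{Gq} and \eqref{Hq}.
\item Reduce each identity, after clearing denominators, to an identity between products of Jacobi theta functions.
\item Prove that product identity with the Jacobi triple product combined with the quintuple product identity. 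Equivalently, 5-dissect $\varphi(q)=\sum q^{n^2}$ by residues $n\bmod 5$: the class $n\equiv0$ gives $\varphi(q^{25})$, and the classes $n\equiv\pm1$ and $n\equiv\pm2$ give theta series in $q^{25}$ times $q$ and $q^4$.
\end{enumerate}

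For the sum side of \eqref{SON2} I would look for the analogous pair of identities expressing $\psi(q^2)\pm q\psi(q^{10})$ through $G$, $H$ and their values at $-q$, for instance via $G(q)H(-q)\pm G(-q)H(q)$. I would then add them in the same way.

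The main obstacle is establishing these theta-sum identities (the $\psi$ version in particular) rigorously without simply quoting the literature. If the elementary route becomes messy, my fallback is modular forms. After multiplying by suitable powers of $q$, the function $\psi(q^2)+q\psi(q^{10})$ and the product $f_2G(q)H(-q)$ are weight $1/2$ modular forms, possibly twisted by a character, on some $\Gamma_1(N)$ with $N$ dividing $80$ or $160$. The same holds for both sides of \eqref{SON1}. Equality then follows by checking coefficients up to the Sturm bound, with the verification done either by direct series expansion or by computer.
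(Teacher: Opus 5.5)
Your proposal is correct, and your first sentence is exactly what the paper does: it gives no argument for this lemma and simply quotes Son \cite{son}.

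The self-contained route you sketch is a genuine alternative, and it works:

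\begin{itemize}
\item The product equalities in \eqref{SON2} come out exactly as you describe, from the displayed formula for $H(-q)$ together with $G(q)H(q)=f_5/f_1$.
\item \eqref{SON1} is precisely the sum of the two Ramanujan identities you quote, and both of these are correct.
\item For \eqref{SON2}, the companion pair you were looking for is
\[
G(q)H(-q)+G(-q)H(q)=\frac{2\psi(q^2)}{f_2},\qquad G(q)H(-q)-G(-q)H(q)=\frac{2q\psi(q^{10})}{f_2}.
\]
These also come from Ramanujan's list of forty identities. Adding them gives $\psi(q^2)+q\psi(q^{10})=f_2G(q)H(-q)$.
\end{itemize}

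Be aware of one subtlety for \eqref{SON2}. The functions $\psi(q^2)$ and $f_2$ are series in $q^2$, while $q\psi(q^{10})$ has only odd powers. So this pair is logically equivalent to \eqref{SON2}: it is just \eqref{SON2} together with its image under $q\to-q$. Here you are therefore trading one citation for another rather than reducing anything.

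Comparing the two approaches: your route buys transparency. Each of Son's identities becomes the half-sum of a classical pair, and everything else is product bookkeeping. The paper's citation buys brevity.

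Your remark that a $5$-dissection of $\varphi(q)$ gives an ``equivalent'' proof is not right as stated. The dissection by itself never produces the product $2f_2G(q)G(q^4)$. If you want to avoid quoting altogether, the modular-form and Sturm-bound fallback you mention is the option that actually closes the argument.
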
 

In the next four lemmas, we present some more useful identities.
\begin{lemma}\label{lemma3}
    We have
    \begin{align}
G^2(q^4)H(-q)
&=\frac{f_1f_{10}}{f_2^2f_5}G(q^2)
\left(\psi(q^2)+q\psi(q^{10})\right)
\label{eq:G4H2}\\\intertext{and}
G^2(q^4)H(-q)
&=\frac{f_1f_{10}}{2f_2^2f_5}G(q^2)
\left(\varphi(q)+\varphi(q^5)\right).
\label{eq:G42H}
\end{align}
\end{lemma}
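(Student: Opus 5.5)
The plan is to prove both identities by the same route. First replace the theta-type sum on the right-hand side by its infinite-product form from Lemma \ref{sonlemma}. Then cancel common factors, so that each statement becomes an identity among $f_r$'s and Rogers--Ramanujan functions. Finally, verify that product identity by writing every factor as a product over residue classes. The tools I intend to use are \eqref{ghf5byf1} in the form $G(q^r)H(q^r)=f_{5r}/f_r$, together with the stated expressions for $G(-q)$ and $H(-q)$ in terms of $G(q),H(q),G(q^2),H(q^2),G(q^4)$.

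For \eqref{eq:G4H2}, I will use one of the three forms of \eqref{SON2}, namely
$$\psi(q^2)+q\psi(q^{10})=\frac{f_5f_{10}}{f_1}\frac{G(q^2)}{H^2(q)G(q^4)}.$$
With this form the prefactor $f_1f_{10}/(f_2^2f_5)$ cancels the $f_1$ and $f_5$, and the right-hand side becomes $f_{10}^2G^2(q^2)/(f_2^2H^2(q)G(q^4))$. I will then substitute $f_{10}/f_2=G(q^2)H(q^2)$. On the left I will expand $H(-q)=H(q^2)G^2(q^2)/(H(q)G(q^4))$. Both sides are now monomials in $G(q^{2^i})$, $H(q^{2^i})$ and $H(q)$, so the claim reduces to an explicit identity between finite products of $(q^a;q^{10})_\infty$ and $(q^a;q^{20})_\infty$. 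I will check it by splitting each $(q^a;q^5)_\infty$ into its even and odd parts modulo $10$, and each $(q^a;q^{10})_\infty$ into classes modulo $20$. If the three forms in \eqref{SON2} lead to different intermediate shapes, I will choose whichever makes the cancellation most transparent, for instance the form $f_2G(q)H(-q)$, which lets $H(-q)$ cancel outright.

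For \eqref{eq:G42H}, I will proceed in one of two ways. The first is to compare with \eqref{eq:G4H2}, which shows the claim is equivalent to $2\left(\psi(q^2)+q\psi(q^{10})\right)=\varphi(q)+\varphi(q^5)$ after the common factor is accounted for. The second, which I expect to be cleaner, is to substitute \eqref{SON1} directly, $\varphi(q)+\varphi(q^5)=2f_2G(q)G(q^4)$. The $2$ cancels and the right-hand side becomes $\frac{f_1f_{10}}{f_2f_5}G(q)G(q^2)G(q^4)$. Using $f_1/f_5=1/(G(q)H(q))$ and $f_{10}/f_2=G(q^2)H(q^2)$, this is $G^2(q^2)H(q^2)G(q^4)/H(q)$. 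By the $H(-q)$ formula this equals $G^2(q^4)H(-q)$, exactly as required. So \eqref{eq:G42H} is essentially a one-line consequence of \eqref{SON1} and the expression for $H(-q)$.

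The main obstacle is the bookkeeping in the first identity. One has to pick the right form of \eqref{SON2} and make sure the powers of $H(q)$ and $G(q^4)$ really balance; the odd-exponent factors in $H(q)$ must cancel against something. If the naive reduction leaves a residual product identity that fails a quick check of the first few coefficients, I will instead derive \eqref{eq:G4H2} from \eqref{eq:G42H}. That means proving the relation between $\psi(q^2)+q\psi(q^{10})$ and $\varphi(q)+\varphi(q^5)$ implicit in the two Son identities, namely $H(-q)$ versus $G(q^4)$ times the appropriate factor. I would first test that relation against low-order $q$-expansions and then prove it by the same residue-class splitting.
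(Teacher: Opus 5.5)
Your proof of \eqref{eq:G42H} is correct and is exactly the paper's computation: \eqref{SON1}, $f_1/f_5=1/(G(q)H(q))$, $f_{10}/f_2=G(q^2)H(q^2)$ and the formula for $H(-q)$ turn the right side into $G^2(q^4)H(-q)$. The gap is in \eqref{eq:G4H2}: as printed it is false, so neither of your routes can succeed. The right side of \eqref{eq:G4H2} expands as $1+2q^2+\cdots$, whereas $G^2(q^4)H(-q)=1+q^2-q^3+\cdots$. With any of the three forms of \eqref{SON2}, your first route reduces the claim to $G^2(q^4)H(q)=G^2(q^2)H(q^2)$, i.e.\ to $G(q^4)=H(-q)$, and this already fails at $q^2$. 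Your fallback needs $2\left(\psi(q^2)+q\psi(q^{10})\right)=\varphi(q)+\varphi(q^5)$, which is the same false relation: the $q^2$-coefficients are $2$ and $0$. You wrote down this consequence yourself. Checking three coefficients of it would have shown that the two displays cannot share the left side $G^2(q^4)H(-q)$, and no residue-class splitting can verify a false product identity.

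What you missed is that the left side of \eqref{eq:G4H2} is a misprint for $G(q^4)H^2(-q)$. That is what the paper's proof establishes. It is also the form used later in the congruence for $\textup{OSOME}(40n+27)$, where $G(q^4)H^2(-q)$ becomes the $\psi$-expression and $G^2(q^4)H(-q)$ the $\varphi$-expression. With the corrected left side, your own reduction closes in one line, with no dissection at all:
\begin{align*}
\frac{f_1f_{10}}{f_2^2f_5}G(q^2)\left(\psi(q^2)+q\psi(q^{10})\right)
&=\frac{f_{10}^2G^2(q^2)}{f_2^2H^2(q)G(q^4)}
=\frac{G^4(q^2)H^2(q^2)}{H^2(q)G(q^4)}\\
&=G(q^4)\left(\frac{H(q^2)G^2(q^2)}{H(q)G(q^4)}\right)^2
=G(q^4)H^2(-q).
\end{align*}
The paper runs the same computation in the other direction. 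It writes $G(q^4)H^2(-q)=H(-q)\cdot G(q^4)H(-q)$, reduces this to $\frac{f_1f_{10}}{f_2f_5}G(q)G(q^2)H(-q)$, and then applies the form $f_2G(q)H(-q)$ of \eqref{SON2}.
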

\begin{proof}We have
    \begin{align*}
G(q^4)H^2(-q)
&=H(-q)\left(G(q^4)H(-q)\right)
\\
&=H(-q)\frac{G^2(q^2)H(q^2)}{H(q)}
\\
&=\frac{f_{10}}{f_2}\frac{G(q^2)}{H(q)}H(-q)
\\
&=\frac{f_1f_{10}}{f_2f_5}G(q^2)G(q)H(-q),\end{align*}
which, by \eqref{SON2}, gives
\begin{align*}G(q^4)H^2(-q)&=\frac{f_1f_{10}}{f_2^2f_5}G(q^2)
\left(\psi(q^2)+q\psi(q^{10})\right),
\end{align*}
which proves the first identity of the lemma.

Similarly,
\begin{align*}
G^2(q^4)H(-q)
&=G(q^4)\left(G(q^4)H(-q)\right)
\notag\\
&=G(q^4)\frac{G^2(q^2)H(q^2)}{H(q)}
\notag\\
&=\frac{f_{10}}{f_2}\frac{G(q^2)G(q^4)}{H(q)}
\notag\\
&=\frac{f_1f_{10}}{f_2f_5}G(q)G(q^2)G(q^4),
\end{align*}
which, by  \eqref{SON1}, implies that
\begin{align*}
G^2(q^4)H(-q)&=\frac{f_1f_{10}}{2f_2^2f_5}G(q^2)
\left(\varphi(q)+\varphi(q^5)\right).
\end{align*}
This proves the second identity to complete the proof.
\end{proof}

\begin{lemma}\label{lemma=0}Set
\begin{align}
 \Phi&:=\varphi(q^2)+\varphi(q^{10})\quad and\quad
 \Psi:=\psi(q^4)+q^2\psi(q^{20}).
\label{PhiPsi}
\end{align}We have
\begin{align}\label{Phi-0}
     f_4\left(2\Psi G(-q^2)-q\Phi H(q^8)\right)
-2f_1\psi(q^5)G(q^2)=0.
\end{align}
\end{lemma}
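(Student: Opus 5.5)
The plan is to remove the theta-function combinations $\Phi$ and $\Psi$ using Lemma~\ref{sonlemma}, then reduce \eqref{Phi-0} to a pure eta-quotient identity, namely the $2$-dissection of $f_1/f_5$.

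\emph{Step 1: apply Son's identities.} Replacing $q$ by $q^2$ in \eqref{SON1} and \eqref{SON2} gives
\begin{align*}
\Phi&=\varphi(q^2)+\varphi(q^{10})=2f_4G(q^2)G(q^8),\\
\Psi&=\psi(q^4)+q^2\psi(q^{20})=f_4G(q^2)H(-q^2).
\end{align*}
Substituting these, the left-hand side of \eqref{Phi-0} becomes
$$
2f_4^2G(q^2)\Bigl(G(-q^2)H(-q^2)-qG(q^8)H(q^8)\Bigr)-2f_1\psi(q^5)G(q^2).
$$
After cancelling the common factor $2G(q^2)$, it suffices to prove
$$
f_4^2\Bigl(G(-q^2)H(-q^2)-qG(q^8)H(q^8)\Bigr)=f_1\psi(q^5).
$$

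\emph{Step 2: convert to eta quotients.} By \eqref{ghf5byf1} we have $G(q)H(q)=f_5/f_1$. This gives $G(q^8)H(q^8)=f_{40}/f_8$. Using $(-q;-q)_\infty=f_2^3/(f_1f_4)$ with $q\to q^2$ and $q\to q^{10}$, we also get
$$
G(-q^2)H(-q^2)=\frac{f_2f_8f_{20}^3}{f_4^3f_{10}f_{40}}.
$$
Finally, $\psi(q^5)=f_{10}^2/f_5$. Dividing through by $f_{10}^2$, the required identity becomes
$$
\frac{f_1}{f_5}=\frac{f_2f_8f_{20}^3}{f_4f_{10}^3f_{40}}-q\frac{f_4^2f_{40}}{f_8f_{10}^2}.
$$
This is the $2$-dissection of $f_1/f_5$.

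\emph{Step 3: prove the dissection.} This step is the main obstacle, and I would try three routes in order.
\begin{itemize}
\item First, I would look for it among the known $2$-dissections of $f_1/f_5$ and $f_5/f_1$ in the literature (Hirschhorn's book on $q$-series contains these), since it is a standard companion of the dissection of $f_5/f_1$.
\item If it is not available, I would derive it from $f_1/f_5=1/(G(q)H(q))$. Replacing $q\to-q$, the product formulas for $G(-q)$ and $H(-q)$ give
$$
G(-q)H(-q)=\frac{G(q^2)^3H(q^2)^3}{G(q)H(q)\,G(q^4)H(q^4)}.
$$
Combining this with the even–odd split of $\varphi(q)+\varphi(q^5)$ and $\psi(q^2)+q\psi(q^{10})$ from Lemma~\ref{sonlemma} should isolate the even and odd parts of $f_1/f_5$.
\item Failing that, I would verify the identity as an equality of modular forms. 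Multiplying by a suitable eta quotient makes each term a holomorphic weight-$0$ (or small-weight) eta quotient on $\Gamma_0(40)$. Ligozat's criteria then control the orders at the cusps, and the identity reduces to checking finitely many $q$-expansion coefficients up to the Sturm bound.
\end{itemize}
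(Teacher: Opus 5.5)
Your proposal is correct and follows essentially the same route as the paper: Son's identities turn the left side of \eqref{Phi-0} into $2G(q^2)$ times an eta-quotient expression, which vanishes by the $2$-dissection of $f_1/f_5$, and the paper simply quotes that dissection as well known, so your first route in Step~3 is all that is needed. The only cosmetic difference is that you use the form $\Psi=f_4G(q^2)H(-q^2)$ and the product for $G(-q^2)H(-q^2)$, whereas the paper uses the first form of \eqref{SON2} together with the product formula for $G(-q^2)$. Note that your sign $-q\,f_4^2f_{40}/(f_8f_{10}^2)$ is the correct one, since the coefficient of $q$ in $f_1/f_5$ is $-1$. The $+$ displayed in \eqref{2disf_1/f5} is a typo, and the paper's own computation in this proof in fact uses the minus sign.
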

\begin{proof} Using \eqref{SON1} and \eqref{SON2}, we have 
    \begin{align*}
&f_4\left(2\Psi G(-q^2)-q\Phi H(q^8)\right)
-2f_1\psi(q^5)G(q^2)\notag\\
&=\frac{2f_4f_{20}G(q^2)G(q^4)G(-q^2)}{H(q^2)G(q^8)}
-2qf_4^{\,2}G(q^2)G(q^8)H(q^8)
-\frac{2f_1f_{10}^{\,2}G(q^2)}{f_5}\notag\\
&=2G(q^2)\left(
f_4f_{20}\frac{G(q^4)G(-q^2)}{H(q^2)G(q^8)}
-qf_4^2G(q^8)H(q^8)
-\frac{f_1f_{10}^{\,2}}{f_5}
\right)\notag\\
&=2G(q^2)\left(f_4f_{20}\frac{G(q^4)}{H(q^2)G(q^8)}\cdot
  \frac{G(q^4)H^2(q^4)}{G(q^2)H(q^8)}
-qf_4^2G(q^8)H(q^8)
-\frac{f_1f_{10}^{\,2}}{f_5}
\right)\notag\\
&=2G(q^2)\left(\frac{f_2f_8f_{20}^{\,3}}{f_4f_{10}f_{40}}
-q\frac{f_4^2f_{40}}{f_8}
-\frac{f_1f_{10}^{\,2}}{f_5}
\right)\\
&=0,
\end{align*}
where we applied \eqref{2disf_1/f5} to arrive at the last equality.
\end{proof}


\begin{lemma}\label{lemma2}
We have
\begin{align}
f_1f_2^2f_4^2f_8^2G(q^8)
+\frac{2f_1f_4^2f_{20}^2G(q^4)}{f_8^2H(q^2)G(q^8)}\equiv\frac{f_1f_2^8f_4^2}{2G(q^2)}\pmod5.\label{Identity_1}
\end{align}
\end{lemma}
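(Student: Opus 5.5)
The plan is to rewrite both terms on the left of \eqref{Identity_1} using Son's identities (Lemma \ref{sonlemma}) with $q\to q^2$, and then reduce everything modulo $5$ to an identity among theta functions. First I cancel the common factor $f_1f_4^2$. Replacing $q$ by $q^2$ in \eqref{SON1} and \eqref{SON2}, and using the notation \eqref{PhiPsi}, gives
\begin{align*}
\Phi=2f_4G(q^2)G(q^8),\qquad
\Psi=\frac{f_{20}G(q^2)G(q^4)}{H(q^2)G(q^8)} .
\end{align*}
Hence $G(q^8)=\Phi/(2f_4G(q^2))$ and $f_{20}G(q^4)/(H(q^2)G(q^8))=\Psi/G(q^2)$. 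Multiplying the reduced congruence by $2G(q^2)$, it becomes equivalent to
\begin{align*}
\frac{f_2^2f_8^2}{f_4}\,\Phi+\frac{4f_{20}}{f_8^2}\,\Psi\equiv f_2^8\pmod 5 .
\end{align*}

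Next I split $\Phi$ and $\Psi$ into their two pieces. I use the product forms $\varphi(q^2)=f_4^5/(f_2^2f_8^2)$ and $\psi(q^4)=f_8^2/f_4$, together with $f_{20}\equiv f_4^5\pmod 5$. The $\varphi(q^2)$-piece then contributes exactly $f_4^4$, and the $\psi(q^4)$-piece contributes $4f_{20}/f_4\equiv 4f_4^4$. These sum to $5f_4^4\equiv0\pmod 5$, so they drop out.

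It remains to prove
\begin{align*}
\frac{f_2^2f_8^2}{f_4}\,\varphi(q^{10})+\frac{4q^2f_{20}}{f_8^2}\,\psi(q^{20})\equiv f_2^8\pmod 5 .
\end{align*}
All the $q$-powers here are even, so I replace $q^2$ by $q$. Writing $4\equiv-1$, I reduce with the Frobenius congruences $f_{5r}\equiv f_r^5$, $\varphi(q^5)\equiv\varphi^5(q)$ and $\psi(q^{10})\equiv\psi^5(q^2)$, and use $f_1^8\equiv f_1^3f_5$. The target becomes
\begin{align*}
\frac{f_1^2f_4^2}{f_2}\,\frac{f_{10}^5}{f_5^2f_{20}^2}-q\,\frac{f_{10}f_{20}^2}{f_4^2f_{10}}\equiv f_1^3f_5\pmod 5 ,
\end{align*}
written in eta-quotient form.

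The main obstacle is this last step: showing that the reduced congruence follows from a genuine theta-function identity. My first attempt will be to divide by a suitable factor, such as $f_1^2f_4^2/f_2$, so that the left side takes the shape
\begin{align*}
\frac{f_2f_8f_{20}^3}{f_4f_{10}f_{40}}-q\frac{f_4^2f_{40}}{f_8}
\end{align*}
of the $2$-dissection \eqref{2disf_1/f5} of $f_1f_{10}^2/f_5$, which was already used in Lemma \ref{lemma=0}. To make this match, I will adjust with further Frobenius congruences, trading $f_{5r}$ for $f_r^5$ in both directions as needed. If that does not match directly, the fallback is to express both sides via $G$ and $H$ using \eqref{ghf5byf1} and compare them with the identity \eqref{Phi-0} of Lemma \ref{lemma=0} read modulo $5$. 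As a sanity check, I will verify the first several coefficients numerically.
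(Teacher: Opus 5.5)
\textbf{There is a genuine gap: the proposal reduces the lemma correctly but never proves the final congruence, which is where the content of the lemma lies.}

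Your steps up to the last displayed congruence are correct and follow the paper's proof. These are the rewriting of both terms through \eqref{SON1}--\eqref{SON2}, the cancellation $f_4^4+4f_{20}/f_4\equiv 5f_4^4\equiv 0$, and the eta-quotient form of what remains. The route you propose for the final step does not work as described. After dividing by $f_1^2f_4^2/f_2$, your congruence reads $\varphi(q^5)-q\,f_2f_{20}^2/(f_1^2f_4^4)\equiv f_1f_2f_5/f_4^2$. Both sides mix odd and even powers of $q$; already $\varphi(q^5)=1+2q^5+\cdots$. So nothing here can be matched term by term with a $2$-dissection of the form $E(q^2)-qO(q^2)$, such as that of $f_1f_{10}^2/f_5$. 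Trading $f_{5r}$ for $f_r^5$ does not change this. The fallback through \eqref{Phi-0} adds nothing, because that identity is the same $2$-dissection multiplied by $G$-factors. A numerical check is not a proof.

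The missing idea is to apply the Frobenius congruences to the theta functions themselves rather than unfolding them into eta quotients. Use $\varphi(q^5)\equiv\varphi^5(q)$, $\psi(q^{10})\equiv\psi^5(q^2)$ and $f_{10}\equiv f_2^5$, together with $\varphi(q)=f_2^5/(f_1^2f_4^2)$ and $\psi(q^2)=f_4^2/f_2$. Then your congruence (after $q^2\to q$) becomes
\[
f_2^4\bigl(\varphi^4(q)-q\psi^4(q^2)\bigr)\equiv f_1^8=f_2^4\varphi^4(-q)\pmod 5,
\]
that is, $\varphi^4(q)-\varphi^4(-q)\equiv q\psi^4(q^2)\pmod 5$. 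This is Jacobi's identity $\varphi^4(q)-\varphi^4(-q)=16q\psi^4(q^2)$ \cite[Entry 25, p.~40]{RN3} read modulo $5$, and it is exactly how the paper concludes.

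If you want to keep dissections, they only work multiplicatively. Reduce the $2$-dissection \eqref{2dis1/f14} of $1/f_1^4$ modulo $5$, multiply it by its image under $q\to-q$, and replace $q^2$ by $q$. This gives $f_1^8\equiv f_2^{24}/(f_1^8f_4^8)-qf_4^8$, which is your target after applying $f_{5r}\equiv f_r^5$. However, this is just a disguised proof of the same quartic identity.
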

    \begin{proof}
        From \eqref{SON1} and \eqref{SON2}, we have
        \begin{align}
\varphi(q^2)+\varphi(q^{10})&=2f_4G(q^2)G(q^8),
\label{eq:D5phi}\\
\psi(q^4)+q^2\psi(q^{20})
&=\frac{f_{20}G(q^2)G(q^4)}{H(q^2)G(q^8)}.
\label{eq:D5psi}
\end{align}
Using \eqref{eq:D5phi}, the first term in \eqref{Identity_1} becomes
\begin{align}
f_1f_2^2f_4^2f_8^2G(q^8)
&=\frac{f_1f_4}{2G(q^2)}\,
f_2^2f_8^2(2f_4G(q^2)G(q^8))\notag\\
&=\frac{f_1f_4}{2G(q^2)}\,
f_2^2f_8^2\{\varphi(q^2)+\varphi(q^{10})\}.\label{first_term}
\end{align}
Similarly, \eqref{eq:D5psi} gives
\begin{align}
\frac{2f_1f_4^2f_{20}^2G(q^4)}{f_8^2H(q^2)G(q^8)}
&=\frac{f_1f_4}{2G(q^2)}\,
\frac{4f_4f_{20}}{f_8^2}
\frac{f_{20}G(q^2)G(q^4)}{H(q^2)G(q^8)}\notag\\
&=\frac{f_1f_4}{2G(q^2)}\,
\frac{4f_4f_{20}}{f_8^2}
\{\psi(q^4)+q^2\psi(q^{20})\}.\label{second_term}
\end{align}
Combining \eqref{first_term} and \eqref{second_term},
 we obtain
\begin{align}
&f_1f_2^2f_4^2f_8^2G(q^8)
+\frac{2f_1f_4^2f_{20}^2G(q^4)}{f_8^2H(q^2)G(q^8)}\notag\\
&\equiv\frac{f_1f_4}{2G(q^2)}
\left(
f_2^2f_8^2\{\varphi(q^2)+\varphi(q^{10})\}
+\frac{4f_4f_{20}}{f_8^2}
\left(\psi(q^4)+q^2\psi(q^{20})\right)
\right)\pmod5\notag\\
&\equiv\frac{f_1f_4}{2G(q^2)}\left(f_2^2f_8^2\left(\varphi(q^2)+\varphi^5(q^2)\right)+\frac{4f_4^6}{f_8^2}
\left(\psi(q^4)+q^2\psi^5(q^4)\right)
\right)\pmod5.\label{reduction_1}
\end{align}
We also have
\begin{equation}
\varphi(q^2)=\frac{f_4^5}{f_2^2f_8^2},
\qquad
\varphi(-q^2)=\frac{f_2^2}{f_4},
\qquad
\psi(q^4)=\frac{f_8^2}{f_4}.
\label{eq:shifted-theta-products}
\end{equation}
Substituting \eqref{eq:shifted-theta-products} into \eqref{reduction_1}, we get
\begin{align}
f_1f_2^2f_4^2f_8^2G(q^8)
+\frac{2f_1f_4^2f_{20}^2G(q^4)}{f_8^2H(q^2)G(q^8)}
&\equiv\frac{f_1f_4}{2G(q^2)}
\left(f_4^5
\left(1+\varphi^4(q^2)+4+4q^2\psi^4(q^4)\right)
\right)\notag\\
&\equiv\frac{f_1f_4^6}{2G(q^2)}
\left(\varphi^4(q^2)-q^2\psi^4(q^4)\right)
\pmod5.\label{reduction_2}
\end{align}
By \cite[Entry 25, p.~40]{RN3},
\begin{align}
    \varphi^4(q)-\varphi^4(-q)
    =16q\psi^4(q^2)&\equiv q\psi^4(q^2)\pmod5,
    \intertext{replacing $q$ by $q^2$ on the above gives}
    \varphi^4(q^2)-q^2\psi^4(q^4)&\equiv\varphi^4(-q^2)\pmod5.\label{identity_3}
\end{align}
Using \eqref{identity_3} in \eqref{reduction_2}, we finally obtain
\begin{align}
f_1f_2^2f_4^2f_8^2G(q^8)
+\frac{2f_1f_4^2f_{20}^2G(q^4)}{f_8^2H(q^2)G(q^8)}
&\equiv\frac{f_1f_4^6}{2G(q^2)}\frac{f_2^8}{f_4^4}\equiv\frac{f_1f_2^8f_4^2}{2G(q^2)}\notag.
\end{align}
This completes the proof.
\end{proof}
\begin{lemma}\label{lemmaforreduction}
We have
\begin{align}
2G(-q^5)+qH(-q^5)
&\equiv
\frac{2f_1^2f_4^4}{f_2f_{10}}
\left(G(q^{20})+2q^4H(q^{20})\right)
\pmod5.\label{Identity_1mod5}
    \end{align}
\end{lemma}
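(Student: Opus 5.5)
The plan is to reduce \eqref{Identity_1mod5} to an exact identity between theta-type series modulo $5$, in the same spirit as Lemma~\ref{lemma2}. The key tools are the Frobenius congruences $f_r^5\equiv f_{5r}$, $\varphi^5(q)\equiv\varphi(q^5)$ and $\psi^5(q)\equiv\psi(q^5)\pmod 5$, together with Son's identities \eqref{SON1} and \eqref{SON2}, which turn sums of theta functions into products involving $G$ and $H$.

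\emph{Step 1: remove the signs.} On the left I will use the paper's expressions
\[
G(-q^5)=\frac{G(q^{10})H^2(q^{10})}{G(q^5)H(q^{20})},\qquad H(-q^5)=\frac{H(q^{10})G^2(q^{10})}{H(q^5)G(q^{20})}.
\]
Clearing the denominator $G(q^5)H(q^5)G(q^{20})H(q^{20})=\frac{f_{25}f_{100}}{f_5f_{20}}$ (by \eqref{ghf5byf1}) then leaves a combination
\[
2\,G(q^{10})H^2(q^{10})H(q^5)G(q^{20})+q\,H(q^{10})G^2(q^{10})G(q^5)H(q^{20})
\]
to compare with the right side, multiplied by the same denominator.

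\emph{Step 2: convert both sides to theta functions.} I will apply \eqref{SON1} and \eqref{SON2} with $q\mapsto q^5$ and $q\mapsto q^{10}$, and the analogous manipulations used in Lemma~\ref{lemma3}. The aim is to rewrite each side as an eta-quotient times a linear combination of $\varphi(q^{5a})+\varphi(q^{25a})$ and $\psi(q^{10a})+q^{5a}\psi(q^{50a})$. Modulo $5$ these become
\[
\varphi(q^{5a})+\varphi^5(q^{5a}),\qquad \psi(q^{10a})+q^{5a}\psi^5(q^{10a}).
\]
These are then expressible via $\varphi(q^{2a})=f_{4a}^5/(f_{2a}^2f_{8a}^2)$, $\varphi(-q)=f_1^2/f_2$ and $\psi(q)=f_2^2/f_1$. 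For the right-hand side I will do the same with $G(q^{20})+2q^4H(q^{20})$, which matches the pattern of the terms in Lemma~\ref{lemma2}. The prefactor $2f_1^2f_4^4/(f_2f_{10})$ is exactly what the conversion $\varphi(-q)=f_1^2/f_2$ and $f_{10}\equiv f_2^5$ should produce. The quartic relation $\varphi^4(q)-\varphi^4(-q)=16q\psi^4(q^2)\equiv q\psi^4(q^2)\pmod5$, used already in \eqref{identity_3}, should then collapse the remaining combination to a pure eta-quotient.

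\emph{Step 3: main obstacle and fallback.} The hard step is the bookkeeping of which $G$ and $H$ factors at arguments $q^5,q^{10},q^{20}$ cancel. If the factors do not line up cleanly, I would use a modular-forms verification instead. I would multiply both sides by a suitable eta-quotient so that each side is a holomorphic modular form of some integral weight on $\Gamma_1(N)$ with $N\mid 400$; the Rogers--Ramanujan functions are weight-$0$ modular functions on $\Gamma_1(5)$ up to $q$-powers. I would then check congruence of coefficients modulo $5$ up to the Sturm bound. In the elementary route, the decisive input is the mod-$5$ collapse via \eqref{identity_3}, exactly as in Lemma~\ref{lemma2}.
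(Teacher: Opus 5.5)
\textbf{There is a genuine gap: Step~2 has no mechanism behind it, and that is where the whole proof has to happen.} Son's identities \eqref{SON1} and \eqref{SON2} turn a \emph{sum} of two theta functions into a \emph{product} of Rogers--Ramanujan functions, such as $G(q^5)G(q^{20})$ or $G(q^5)H(-q^5)$. They say nothing about a \emph{linear combination} of two different Rogers--Ramanujan functions with unequal coefficients. That is exactly what both $2G(-q^5)+qH(-q^5)$ and $G(q^{20})+2q^4H(q^{20})$ are.

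Your Step~1 does not help. It leaves $G(q^{10})H(q^{10})$ times a two-term sum built from the pairings $H(q^5)G(q^{20})$ and $G(q^5)H(q^{20})$. Nothing you cite rewrites this as an eta-quotient times $\varphi(q^{5a})+\varphi(q^{25a})$ or $\psi(q^{10a})+q^{5a}\psi(q^{50a})$; in \eqref{SON2} at $q^5$, the factor $H(q^5)G(q^{20})$ appears only in a denominator. Consequently the quartic relation behind \eqref{identity_3} has nothing to act on. Your claims that the prefactor is ``exactly what the conversion should produce'' and that the combination ``should collapse'' are therefore unsupported.

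The Step~3 fallback is legitimate in principle but is not carried out. The left side has weight $0$, while $2f_1^2f_4^4/(f_2f_{10})$ has weight $2$. So you would first have to insert a factor such as $f_1^5/f_5\equiv1\pmod 5$. You would then need to make both sides holomorphic on a common group; the substitution $q\to-q$ and the argument $q^{20}$ raise the level. Only then could you compute the Sturm bound and check coefficients.

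\textbf{The missing idea is to square both sides.} Modulo $5$ one has $(2x+y)^2\equiv-(x^2+xy-y^2)$ and $(x+2y)^2\equiv x^2-xy-y^2$. The exact identity $G^2(q^5)-qG(q^5)H(q^5)-q^2H^2(q^5)=f_1/f_5$, which comes from the $5$-dissection of $f_1$, can be applied at $q^4$ and at $-q$. This gives
\[
\bigl(2G(-q^5)+qH(-q^5)\bigr)^2\equiv-\frac{f_2^3f_5f_{20}}{f_1f_4f_{10}^3},\qquad
\bigl(G(q^{20})+2q^4H(q^{20})\bigr)^2\equiv\frac{f_4}{f_{20}}\pmod 5.
\]
The paper obtains the same two congruences from Ramanujan's identity $H(q)G^{11}(q)-q^2G(q)H^{11}(q)=1+11qG^6(q)H^6(q)$, combined with $G^5(q)\equiv G(q^5)$ and $H^5(q)\equiv H(q^5)$.

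By $f_k^5\equiv f_{5k}$, the ratio of the two squares is $\equiv\bigl(2f_1^2f_4^4/(f_2f_{10})\bigr)^2$. Since $\mathbb{F}_5[[q]]$ is an integral domain, it follows that
\[
2G(-q^5)+qH(-q^5)\equiv\pm\frac{2f_1^2f_4^4}{f_2f_{10}}\bigl(G(q^{20})+2q^4H(q^{20})\bigr)\pmod 5.
\]
Both sides have constant term $2$ and $2\not\equiv-2\pmod 5$, so the sign is plus.
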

\begin{proof}
    From \cite[Entry 8.3.1, p.~222]{AndrewsBerndtIII}, we have
    \begin{align}
        H(q)G^{11}(q)-q^2G(q)H^{11}(q)
=1+11qG^6(q)H^6(q).\label{eq:ramanujanLNB}
    \end{align}
    Replacing $q$ by $q^4$ in the above, it follows that
    \begin{align*}
        G^{10}(q^4)-q^8H^{10}(q^4)
=&\frac{1}{G(q^4)H(q^4)}
   +11q^4G^5(q^4)H^5(q^4).\end{align*}
  Using \eqref{ghf5byf1} in the above, we find that 
  \begin{align} 
G^{10}(q^4)
-q^4G^5(q^4)H^5(q^4)
-q^8H^{10}(q^4)&\equiv\frac{1}{G(q^4)H(q^4)}\equiv\frac{f_4}{f_{20}}\pmod{5}.\label{GH_relation_1} 
    \end{align}

We also have
\begin{align}
    \left(G(q^{20})+2q^4H(q^{20})\right)^2
&\equiv\left( G^5(q^4)+2q^4H^5(q^4)\right)^2\notag\\
&\equiv G^{10}(q^4)
-q^4G(q^4)^5H^5(q^4)
-q^8H^{10}(q^4)\pmod5.\label{GH_relation_2}
\end{align}
From \eqref{GH_relation_1} and \eqref{GH_relation_2}, we have
\begin{align}
    \left(G(q^{20})+2q^4H(q^{20})\right)^{2}
\equiv\frac{f_4}{f_{20}}
\pmod5.\label{GH_relation_3}
\end{align}

Next, replacing $q$ by $-q$ in \eqref{eq:ramanujanLNB} and then dividing by
$G(-q)H(-q)$, we see that
\begin{align}
   \frac{1}{G(-q)H(-q)}
\equiv{}&
G^{10}(-q)
+qG^5(-q)H^5(-q)
-q^2H^{10}(-q)
\pmod5.\label{GH_relation_4}
\end{align}

Now, 
\begin{align*}
2G^5(-q)+qH^5(-q)
\equiv 2G(-q^5)+qH(-q^5)\pmod{5}.
\end{align*}
Therefore,
\begin{align}
&\left(2G(-q^5)+qH(-q^5)\right)^2\equiv -G^{10}(-q)
-qG^5(-q)H^5(-q)
+q^2H^{10}(-q)\pmod5.\label{GH_relation_5}
\end{align}
From \eqref{GH_relation_4} and \eqref{GH_relation_5}, it follows that
\begin{align}
    \left(2G(-q^5)+qH(-q^5)\right)^{2}
\equiv-\frac{1}{G(-q)H(-q)}
\pmod5.\label{GH_relation_6}
\end{align}

Replacing $q$ by $-q$ in \eqref{ghf5byf1}, we find that
\begin{align*}
  G(-q)H(-q)=\frac{f_1f_4f_{10}^3}{f_2^3f_5f_{20}}
\end{align*}
Therefore, \eqref{GH_relation_6} reduces to
\begin{align}
    \left(2G(-q^5)+qH(-q^5)\right)^{2}
\equiv-\frac{f_2^3f_5f_{20}}{f_1f_4f_{10}^3}
\pmod5.\label{GH_relation_66}
\end{align}
From \eqref{GH_relation_3} and \eqref{GH_relation_66},  we have
\begin{align}
    \left(
\frac{2G(-q^5)+qH(-q^5)}
     {G(q^{20})+2q^4H(q^{20})}
\right)^{2}
&\equiv-\frac{f_2^3f_5f_{20}^2}{f_1f_4^2f_{10}^3}\\
&\equiv\left(2\frac{f_1^2f_4^4}{f_2f_{10}}\right)^2
\pmod5.
\end{align}
Taking square roots and then comparing the constant terms, we arrive that
\begin{align*}2G(-q^5)+qH(-q^5)
&\equiv
\frac{2f_1^2f_4^4}{f_2f_{10}}
\left(G(q^{20})+2q^4H(q^{20})\right)
\pmod5.
\end{align*}
 This completes the proof of \eqref{Identity_1mod5}. 
 \end{proof}

 \subsection{\texorpdfstring{$t$}{t}-dissection of \texorpdfstring{$q$}{q}-series}
\begin{definition}
Let $t\geq2$ be an integer and let
$$
F(q)=\sum_{n\geq0}a(n)q^n.
$$
The $t$-dissection of $F(q)$ is the decomposition
$$
F(q)=\sum_{r=0}^{t-1}q^rF_r(q^t),\quad
\textup{where,}\quad
F_r(q)=\sum_{n\geq0}a(tn+r)q^n.
$$
Thus, $F_r(q)$ contains precisely the coefficients of $F(q)$ whose
indices are congruent to $r$ modulo $t$.
\end{definition}

First we recall 4-dissections of $f_2G(q)$ and $f_2H(q)$ due to Watson \cite[p.~60]{watson}.
\begin{lemma}
We have
\begin{align}    f_2G(q)&=f_8\left(G\left(q^{16}\right)+qH(-q^4)\right)\label{watson1}\\\intertext{and}
    f_2H(q)&=f_8\left(G(-q^{4})+q^3H(q^{16})\right)\label{watson2}.
\end{align}
\end{lemma}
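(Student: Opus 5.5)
We would prove both identities by the same route. First turn each into an identity between theta series and eta-quotients, then settle it either by an explicit $q$-series dissection or, if that fails, by a finite check using modular forms. We describe the argument for \eqref{watson1}; \eqref{watson2} is identical with the roles of $(q,q^4)$ and $(q^2,q^3)$ interchanged.

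\emph{Step 1: clear the product denominators.} By the Jacobi triple product, $f_1G(q)=(q^2,q^3,q^5;q^5)_\infty=\sum_{n}(-1)^nq^{n(5n+1)/2}$. Hence $f_{16}G(q^{16})$ is the theta series $\sum_n(-1)^nq^{8n(5n+1)}$. In the same way, $H(-q^4)$ equals a theta series $T(q^4)=\sum_n q^{2n(5n+3)}(\pm1)$ (signs read off from $q\mapsto-q$ in $(q,q^4,q^5;q^5)_\infty$) divided by the product $f(-q^4)$, where $f(-q):=(-q;-q)_\infty=f_2^3/(f_1f_4)$. Multiplying \eqref{watson1} by $f_1$, it becomes
$$
f_2\sum_n(-1)^nq^{n(5n+1)/2}
= \frac{f_1f_8}{f_{16}}\sum_n(-1)^nq^{8n(5n+1)}
+ q\,\frac{f_1f_8\,f_{16}}{f_8^3/f_4}\;T(q^4).
$$
This is an identity among weight-one holomorphic objects: eta-quotients times theta series of weight $1/2$.

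\emph{Step 2: an elementary dissection attempt.} We split the left-hand theta series $\sum_n(-1)^nq^{n(5n+1)/2}$ according to $n$ modulo $4$; the exponents then fall into residue classes mod $4$ (and mod $16$). We use the standard $2$-dissections $f_1=f_2f_{16}^2/(f_4f_8)\cdot(\ldots)$, or equivalently $\varphi(-q)=\varphi(q^4)-2q\psi(q^8)$, applied twice to the factor $f_1$ on the right. Matching the components $q^rF_r(q^4)$ for $r=0,1,2,3$ then reduces \eqref{watson1} to four theta identities in $q^4$. Two of these should vanish identically, and two should be product formulas provable by the quintuple product identity. An alternative we would also try is Son's identities \eqref{SON1}--\eqref{SON2}: they already express $f_2G(q)G(q^4)$ and $f_2G(q)H(-q)$ through $\varphi$ and $\psi$, whose $4$-dissections are classical.

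\emph{Step 3: fallback by modular forms.} If Step 2 becomes unmanageable, we would prove the displayed identity by modular forms. After multiplying by a suitable power of $q$, both sides are holomorphic modular forms of weight $1$ on $\Gamma_1(N)$ for some $N\mid 320$; these are eta-quotients times unary theta series attached to the quadratic form $40n^2$ and its scalings. By Sturm's theorem it then suffices to compare coefficients up to the Sturm bound, which is a finite computation.

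We expect the main obstacle to be Step 2: keeping track of the signs of $H(-q^4)$ and identifying exactly which product each nonzero dissection component equals. That is why we keep the Sturm-bound verification available as a rigorous alternative. Once \eqref{watson1} is established, \eqref{watson2} follows by repeating the argument with $f_1H(q)=\sum_n(-1)^nq^{n(5n+3)/2}$.
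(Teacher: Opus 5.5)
The paper gives no proof of this lemma; it is quoted from Watson. Your proposal therefore has to stand on its own, and as written it is a plan rather than a proof.

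\emph{Step 2 fails as described.} After multiplying by $f_1$, the right side is $f_1f_8\bigl(G(q^{16})+qH(-q^4)\bigr)$. Since $f_1=1-q-q^2+q^5+q^7-\cdots$ has terms in every residue class mod $4$, no component of the $4$-dissection vanishes; already $f_1f_2G(q)=1-2q^2-q^3+q^5+\cdots$. So the four component identities are bilinear relations among dissection pieces of $f_1$, $f_2$ and theta series, not product formulas. The dissection you cite, $\varphi(-q)=\varphi(q^4)-2q\psi(q^8)$, is a dissection of $f_1^2/f_2$, not of the single factor $f_1$. Also, the class of $n(5n+1)/2$ mod $4$ is determined by $n$ mod $8$, not $n$ mod $4$.

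\emph{Step 3 is a legitimate method in principle, but it is mis-specified and not carried out.} The modular normalization is $q^{1/24+1/12-1/60}f_1f_2G(q)=q^{13/120}f_1f_2G(q)$. No power of $q$ alone puts this on a $\Gamma_1(N)$; you must also substitute $q\to q^{120}$, which makes the level a multiple of $120$, so $N\mid 320$ is wrong. The character and holomorphy at every cusp are asserted, not checked; the eta quotients $f_1f_8/f_{16}$ and $f_1f_4f_{16}/f_8^2$ on your right side have poles at some cusps. The Sturm-bound comparison is never done.

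The missing idea is to clear denominators with $H(q^4)$, respectively $G(q^4)$, instead of $f_1$. Sort the factors of each product into residue classes modulo $10$ and $20$ and apply the Jacobi triple product. This gives
\begin{align*}
f_2G(q)H(q^4)&=f(q,q^9), & f_2H(q)G(q^4)&=f(q^3,q^7),\\
f_2G(q)G(-q)&=f(q^4,q^6), & f_2H(q)H(-q)&=f(q^2,q^8).
\end{align*}
Now split $f(q,q^9)=\sum_n q^{5n^2-4n}$ and $f(q^3,q^7)=\sum_n q^{5n^2-2n}$ by the parity of $n$:
\begin{align*}
f(q,q^9)&=f(q^{12},q^{28})+qf(q^8,q^{32}),\\
f(q^3,q^7)&=f(q^{16},q^{24})+q^3f(q^4,q^{36}).
\end{align*}
The four product identities with $q\to q^4$ identify the pieces as $f_8G(q^{16})H(q^4)$, $f_8H(q^4)H(-q^4)$, $f_8G(q^4)G(-q^4)$ and $f_8G(q^4)H(q^{16})$. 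Dividing by $H(q^4)$, respectively $G(q^4)$, gives \eqref{watson1} and \eqref{watson2}.

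The Son route you mention only in passing also gives \eqref{watson1} immediately. Put $\varphi(q)=\varphi(q^4)+2q\psi(q^8)$ into both terms of \eqref{SON1}. Recognize $\varphi(q^4)+\varphi(q^{20})$ and $\psi(q^8)+q^4\psi(q^{40})$ via \eqref{SON1} and \eqref{SON2} with $q\to q^4$, then divide by $2G(q^4)$. For \eqref{watson2} that route needs the companion identity $\varphi(q)-\varphi(q^5)=2qf_2H(q)H(q^4)$, which is not among the stated lemmas.
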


In the following lemma, we state some well-known $2$-, $3$- and $5$- dissections.
\begin{lemma}We have
\begin{align}
    \frac{f_1^2}{f_2}&=\frac{f_8^5}{f_4^2f_{16}^2}-2q\frac{f_{16}^2}{f_8},\label{eq:two-dissection-one}\\
    \frac{f_2}{f_1^2}&=\frac{f_8^5}{f_2^4f_{16}^2}+2q\frac{f_4^2f_{16}^2}{f_2^4f_8},\label{2disreciprocal}\\
    f_1^4&=\frac{f_4^{10}}{f_2^2f_8^4}-4q\frac{f_2^2f_8^4}{f_4^2},\label{2disf14}\\
    \frac{1}{f_1^4}&=\frac{f_4^{14}}{f_2^{14}f_8^4}+4q\frac{f_4^2f_8^4}{f_2^{10}},\label{2dis1/f14}\\
    \frac{f_1}{f_5}
&=\frac{f_2f_8f_{20}^3}{f_4f_{10}^3f_{40}}
+q\frac{f_4^2f_{40}}{f_8f_{10}^2},\label{2disf_1/f5}\\
     \frac{f_1^2}{f_2}&=\frac{f_9^2}{f_{18}}-2q\frac{f_3f_{18}^2}{f_6f_9},\label{3disf_1f_2}\\  \varphi(q)&=\varphi(q^{25})+2qf(q^{15},q^{35})+2q^4f(q^5,q^{45}),\label{5-dissofvarphi}
\end{align}
where 
\begin{align*}
f(a,b)&:=\sum_{n=-\infty}^{\infty}a^{n(n+1)/2}b^{n(n-1)/2}.
\end{align*}
\end{lemma}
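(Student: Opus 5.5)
All seven identities are classical, and I would prove each by reducing it to a dissection of a theta series, followed by the standard product forms
$$\varphi(-q)=\frac{f_1^2}{f_2},\quad \varphi(q)=\frac{f_2^5}{f_1^2f_4^2},\quad \psi(q)=\frac{f_2^2}{f_1},$$
used with $q$ replaced by powers of $q$. These are the identities recorded, with $q\to q^2$, in \eqref{eq:shifted-theta-products}. The basic inputs from \cite{RN3} are:
\begin{itemize}
\item[(a)] $\varphi(q)=\varphi(q^4)+2q\psi(q^8)$, obtained by splitting $n$ in $\sum q^{n^2}$ by parity;
\item[(b)] $\varphi^2(q)=\varphi^2(q^2)+4q\psi^2(q^4)$;
\item[(c)] $\varphi^2(q)\varphi^2(-q)=\varphi^4(-q^2)$, the product form of $\varphi(q)\varphi(-q)=\varphi^2(-q^2)$.
\end{itemize}

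\textbf{The 2-dissections \eqref{eq:two-dissection-one}--\eqref{2dis1/f14}.}
\begin{itemize}
\item[(i)] For \eqref{eq:two-dissection-one}, replace $q$ by $-q$ in (a). This gives $\varphi(-q)=\varphi(q^4)-2q\psi(q^8)$. Then substitute $\varphi(q^4)=f_8^5/(f_4^2f_{16}^2)$ and $\psi(q^8)=f_{16}^2/f_8$.
\item[(ii)] For \eqref{2disreciprocal}, write $f_2/f_1^2=1/\varphi(-q)=\varphi(q)/\varphi^2(-q^2)$. Then insert (a) and use $\varphi^2(-q^2)=f_2^4/f_4^2$. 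The even part is $f_8^5/(f_2^4f_{16}^2)$ and the odd part is $2qf_4^2f_{16}^2/(f_2^4f_8)$, as claimed.
\item[(iii)] For \eqref{2disf14}, write $f_1^4=f_2^2\varphi^2(-q)$. By (b) with $q\to-q$, $\varphi^2(-q)=\varphi^2(q^2)-4q\psi^2(q^4)$. Converting to products gives $f_4^{10}/(f_2^2f_8^4)-4qf_2^2f_8^4/f_4^2$.
\item[(iv)] For \eqref{2dis1/f14}, write $1/f_1^4=\varphi^2(q)/(f_2^2\varphi^4(-q^2))$ using (c). Then apply (b) and $\varphi^4(-q^2)=f_2^8/f_4^4$. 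A quick exponent check gives $f_4^{14}/(f_2^{14}f_8^4)+4qf_4^2f_8^4/f_2^{10}$.
\end{itemize}

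\textbf{The 3- and 5-dissections \eqref{3disf_1f_2} and \eqref{5-dissofvarphi}.}
\begin{itemize}
\item[(i)] For \eqref{3disf_1f_2}, split $n$ modulo $3$ in $\varphi(q)$. The class $n\equiv0$ gives $\varphi(q^9)$. The classes $n=3m\pm1$ together give $2\sum_m q^{9m^2+6m+1}=2qf(q^3,q^{15})$. Replacing $q$ by $-q$ yields $\varphi(-q)=\varphi(-q^9)-2qf(-q^3,-q^{15})$. By the Jacobi triple product, $f(-q^3,-q^{15})=(q^3;q^{18})_\infty(q^{15};q^{18})_\infty f_{18}=f_3f_{18}^2/(f_6f_9)$. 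Together with $\varphi(-q^9)=f_9^2/f_{18}$, this is \eqref{3disf_1f_2}.
\item[(ii)] For \eqref{5-dissofvarphi}, split $n$ modulo $5$. The class $n=5m$ gives $\varphi(q^{25})$. The classes $n=5m\pm1$ give $2\sum_m q^{25m^2+10m+1}=2qf(q^{15},q^{35})$. The classes $n=5m\pm2$ give $2\sum_m q^{25m^2+20m+4}=2q^4f(q^5,q^{45})$. This step is pure bookkeeping.
\end{itemize}

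\textbf{The main obstacle: \eqref{2disf_1/f5}.} This is the only identity not immediately reducible to a theta splitting. I would first try to cite it from Hirschhorn's work on 2-dissections of quotients involving $f_1$ and $f_5$. For a self-contained proof, I would write $f_1/f_5$ as $f(-q,-q^2)/f(-q^5,-q^{10})$. Since $(-q;q)_\infty=f_2/f_1$, the identity is equivalent to an identity among eta quotients in $q^2$. Multiplying through by a suitable eta quotient turns both sides into modular functions on $\Gamma_0(40)$, and the identity can then be verified by checking orders at the cusps together with finitely many coefficients (Sturm's bound). An alternative is to use $G(q)H(q)=f_5/f_1$ together with Watson's 4-dissections \eqref{watson1}--\eqref{watson2} and extract even and odd parts, but I expect the modular verification to be the more reliable route.
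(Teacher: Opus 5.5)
The paper gives no proof of this lemma; it simply calls all seven identities well known. Your derivations of \eqref{eq:two-dissection-one}--\eqref{2dis1/f14}, \eqref{3disf_1f_2} and \eqref{5-dissofvarphi} are correct. Each one comes from the parity, mod-$3$ or mod-$5$ splitting of $\varphi(q)$, together with $\varphi^2(q)=\varphi^2(q^2)+4q\psi^2(q^4)$ and $\varphi(q)\varphi(-q)=\varphi^2(-q^2)$, and all the product conversions check out.

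\textbf{The gap is \eqref{2disf_1/f5}.} As printed, this identity is false, and your plan to verify it by cusp orders and a Sturm bound does not notice this. Expanding the left side,
\[
\frac{f_1}{f_5}=(1-q-q^2+q^5+\cdots)(1+q^5+\cdots)=1-q-q^2+2q^5-q^6+\cdots,
\]
so the coefficient of $q$ on the left is $-1$. On the right, the first term is a series in $q^2$ and the second term contributes $+q$. Your verification would therefore fail at the first coefficient, and no argument can prove the identity in the stated form.

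The true $2$-dissection is
\[
\frac{f_1}{f_5}=\frac{f_2f_8f_{20}^3}{f_4f_{10}^3f_{40}}-q\frac{f_4^2f_{40}}{f_8f_{10}^2}.
\]
This minus-sign version is exactly what the paper uses in the proof of Lemma \ref{lemma=0}: multiplying it by $f_{10}^2$ gives the bracket that vanishes there.

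For the corrected identity, your modular route does work. Divide through by $f_1/f_5$. The two right-hand terms become
\[
\frac{f_2f_5f_8f_{20}^3}{f_1f_4f_{10}^3f_{40}}
\quad\text{and}\quad
q\frac{f_4^2f_5f_{40}}{f_1f_8f_{10}^2}.
\]
Each of these is exactly a weight-$0$ eta quotient. Writing $r_\delta$ for the exponent of $f_\delta$, each satisfies $\sum\delta r_\delta\equiv\sum(40/\delta)r_\delta\equiv0\pmod{24}$ and $\prod\delta^{r_\delta}=4$. Hence both are modular functions on $\Gamma_0(40)$, and the cusp-order and finitely-many-coefficients check applies.

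Checking a couple of coefficients should have come before the proof plan. As it stands, your proposal claims to prove a false statement; the sign of the $q$-term must be corrected first.
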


Finally we recall a well-known $5$-dissection of $1/f_1$.
\begin{lemma}\textnormal{\cite[p.~165]{spirit}} We have
    \begin{align}    
        \label{c2}\frac{1}{f_1}&=\frac{f_{25}^5}{f_5^6}\bigg(\dfrac{G^4(q^5)}{H^4(q^5)}+q\dfrac{G^3(q^5)}{H^3(q^5)}+2q^2\dfrac{G^2(q^5)}{H^2(q^5)}+3q^3\dfrac{G(q^5)}{H(q^5)}+5q^4\notag\\
        &\quad-3q^5\dfrac{H(q^5)}{G(q^5)}+2q^6\dfrac{H^2(q^5)}{G^2(q^5)}-q^7\dfrac{H^3(q^5)}{G^3(q^5)}+q^8\dfrac{H^4(q^5)}{G^4(q^5)}\bigg).
    \end{align}
\end{lemma}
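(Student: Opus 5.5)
The plan is to deduce the $5$-dissection of $1/f_1$ from Ramanujan's $5$-dissection of $f_1$ itself, and then rationalize the denominator by multiplying over fifth roots of unity. Write $R:=G(q^5)/H(q^5)$. The first step is the classical identity
$$
f_1=f_{25}\left(R-q-\frac{q^2}{R}\right).
$$
It follows from Euler's pentagonal number theorem by splitting $\sum(-1)^nq^{n(3n-1)/2}$ according to $n \bmod 5$. The exponents fall into the residue classes $0,1,2 \pmod 5$ only. Each of the three resulting series is summed by the Jacobi triple product or the quintuple product identity, and then identified with $f_{25}G(q^5)/H(q^5)$, $-qf_{25}$ and $-q^2f_{25}H(q^5)/G(q^5)$ via the product formulas \eqref{Gq} and \eqref{Hq}. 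I expect this step to be the main obstacle; in practice I would quote it from the same source \cite{spirit}, or verify it through the quintuple product.

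Next, let $\omega$ be a primitive fifth root of unity. Compare the factors $1-\omega^{jn}q^n$ for $n$ prime to $5$ and for $5\mid n$. This gives
$$
\prod_{j=0}^{4}f_1(\omega^jq)=\frac{f_5^6}{f_{25}}.
$$
Replacing $q$ by $\omega^jq$ in the dissection of $f_1$ leaves $R$ and $f_{25}$ unchanged, since both are series in $q^5$. Hence
$$
\prod_{j=0}^{4}\left(R-\omega^jq-\omega^{2j}\frac{q^2}{R}\right)=\frac{f_5^6}{f_{25}^6}.
$$
The left-hand side is a symmetric function of the $\omega^j$, and a direct expansion gives $R^5-11q^5-q^{10}/R^5$. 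This recovers the well-known relation $R^5-11q^5-q^{10}R^{-5}=f_5^6/f_{25}^6$.

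Finally, I would write
$$
\frac1{f_1}=\frac{\prod_{j=1}^{4}f_1(\omega^jq)}{\prod_{j=0}^{4}f_1(\omega^jq)}
=\frac{f_{25}}{f_5^6}\,f_{25}^4\prod_{j=1}^{4}\left(R-\omega^jq-\omega^{2j}\frac{q^2}{R}\right).
$$
The remaining product equals
$$
\frac{R^5-11q^5-q^{10}R^{-5}}{R-q-q^2R^{-1}}.
$$
Polynomial long division of the numerator by the denominator, treated in the variable $R$ with coefficients in $q$, yields
$$
R^4+qR^3+2q^2R^2+3q^3R+5q^4-3q^5R^{-1}+2q^6R^{-2}-q^7R^{-3}+q^8R^{-4}.
$$
As a check, multiplying this quotient back by $R-q-q^2/R$ reproduces the numerator. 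This gives exactly \eqref{c2} with the prefactor $f_{25}^5/f_5^6$. It is a genuine $5$-dissection because the $k$-th term has the form $q^k$ times a series in $q^5$.
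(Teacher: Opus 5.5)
Your proof is correct. The $5$-dissection $f_1=f_{25}\bigl(R-q-q^2R^{-1}\bigr)$ with $R=G(q^5)/H(q^5)$, the product $\prod_{j=0}^{4}f_1(\omega^jq)=f_5^6/f_{25}$, and the division check $(1-u-u^2)(1+u+2u^2+3u^3+5u^4-3u^5+2u^6-u^7+u^8)=1-11u^5-u^{10}$ with $u=q/R$ all hold, and together they give \eqref{c2} exactly. The paper supplies no proof of this lemma and simply quotes it from \cite{spirit}; your argument is essentially the standard derivation of that result, so there is no different route to compare.
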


\subsection{Half-integral weight modular forms}
We recall only the facts needed in this paper. Let
$$
 \Gamma_0(N):=
 \left\{
 \begin{pmatrix}a&b\\ c&d\end{pmatrix}\in\mathrm{SL}_2(\mathbb Z):
 c\equiv0\pmod N
 \right\}.
$$
For an odd positive integer $k$ and a positive integer $N$ divisible by $4$, write
$$
 M_{k/2}\bigl(\widetilde{\Gamma}_0(N)\bigr)
$$
for the space of holomorphic modular forms of weight $k/2$ on the metaplectic cover of $\Gamma_0(N)$. For detailed treatments, see Shimura \cite{Shimura1973}, Koblitz \cite{Koblitz1993}, Ono \cite{Ono2004}, and Wang and Pei \cite{WangPei2012}.

Let
$$
 f(z)=\sum_{n\geq0}a(n)q^n\in M_{k/2}\bigl(\widetilde{\Gamma}_0(N)\bigr),
 \qquad q=e^{2\pi iz},
$$
and let $\ell$ be an odd prime with $\ell\nmid N$. The half-integral weight Hecke operator is given by
\begin{align}
 f\big|T_{k/2,N}(\ell^2)
 =\sum_{n\geq0}\Bigg(&a(\ell^2n)
 +\left(\frac{(-1)^{(k-1)/2}n}{\ell}\right)
 \ell^{(k-3)/2}a(n)
 +\ell^{k-2}a\left(\frac{n}{\ell^2}\right)\Bigg)q^n,
\label{half-integral-Hecke}
\end{align}
where $a(n/\ell^2)=0$ if $\ell^2\nmid n$. Unlike the integral-weight case, the natural index is $\ell^2$.

The theta function $\varphi(q)$ is a modular form of weight $1/2$ on $\Gamma_0(4)$. Consequently,
$$
 \varphi^3(q)\in M_{3/2}\bigl(\widetilde{\Gamma}_0(4)\bigr).
$$
Write
\begin{equation}\label{phi-cube}
 \varphi^3(q)=\sum_{n\geq0}r_3(n)q^n,
\end{equation}
where $r_3(n)$ is the number of ordered triples $(x,y,z)\in\mathbb Z^3$ such that
$$
 x^2+y^2+z^2=n.
$$

We shall use the standard dimension formula
$$
 \dim M_{k/2}\bigl(\widetilde{\Gamma}_0(4)\bigr)
 =1+\left\lfloor\frac{k}{4}\right\rfloor
 \qquad(k\text{ odd}).
$$
For $k=3$, this space is one-dimensional. This gives the following Hecke relation.

\begin{proposition}\label{theta-Hecke-proposition}
For every odd prime $\ell$,
\begin{equation}\label{theta-Hecke-eigenvalue}
 \varphi^3(q)\big|T_{3/2,4}(\ell^2)=(\ell+1)\varphi^3(q).
\end{equation}
Consequently,
\begin{equation}\label{r3-Hecke-relation}
 r_3(\ell^2n)+\left(\frac{-n}{\ell}\right)r_3(n)
 +\ell r_3\left(\frac{n}{\ell^2}\right)
 =(\ell+1)r_3(n)
\end{equation}
\end{proposition}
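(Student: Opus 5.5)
Since $M_{3/2}\bigl(\widetilde{\Gamma}_0(4)\bigr)$ is one-dimensional by the dimension formula with $k=3$, I will show that $T_{3/2,4}(\ell^2)$ preserves this space and then identify the eigenvalue by comparing constant terms.

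First, for an odd prime $\ell$ we have $\ell\nmid 4$, so the Hecke operator $T_{3/2,4}(\ell^2)$ maps $M_{3/2}\bigl(\widetilde{\Gamma}_0(4)\bigr)$ to itself; this is a standard fact from Shimura's theory, as in Koblitz or Ono. Because the space is spanned by $\varphi^3(q)$, it follows that $\varphi^3(q)\big|T_{3/2,4}(\ell^2)=\lambda_\ell\,\varphi^3(q)$ for some constant $\lambda_\ell$.

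Next, I find $\lambda_\ell$ by reading off the coefficient of $q^0$ from \eqref{half-integral-Hecke} with $k=3$ and $a(n)=r_3(n)$. For $n=0$:
\begin{itemize}
\item the first term is $a(0)=r_3(0)=1$;
\item the middle term has the Legendre symbol $\left(\frac{0}{\ell}\right)=0$, so it vanishes;
\item the last term is $\ell^{k-2}a(0/\ell^2)=\ell\cdot a(0)=\ell$, since $\ell^2\mid 0$.
\end{itemize}
So the constant term of $\varphi^3(q)\big|T_{3/2,4}(\ell^2)$ is $1+\ell$. The constant term of $\lambda_\ell\varphi^3(q)$ is $\lambda_\ell$, so $\lambda_\ell=\ell+1$, which gives \eqref{theta-Hecke-eigenvalue}.

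Finally, comparing the coefficients of $q^n$ on both sides of \eqref{theta-Hecke-eigenvalue}, with $(-1)^{(3-1)/2}=-1$ and $\ell^{(3-3)/2}=1$, gives exactly \eqref{r3-Hecke-relation}.

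The only substantive input is that $T_{3/2,4}(\ell^2)$ preserves the space of holomorphic forms on $\widetilde{\Gamma}_0(4)$ (the level-$4$ multiplier system of $\varphi^3$ is the standard theta one). I will cite this from the literature rather than prove it. The rest is bookkeeping, the one point needing care being the convention that $a(n/\ell^2)$ is evaluated at $n=0$ as $a(0)$.
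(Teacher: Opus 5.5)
Your proposal is correct and follows the paper's proof essentially step for step: $T_{3/2,4}(\ell^2)$ preserves the one-dimensional space $M_{3/2}\bigl(\widetilde{\Gamma}_0(4)\bigr)$ spanned by $\varphi^3(q)$, the eigenvalue $\ell+1$ is read off from the constant term, and comparing coefficients of $q^n$ then gives the relation for $r_3$. Your explicit check that the middle term vanishes at $n=0$ (because $\left(\frac{0}{\ell}\right)=0$) spells out a step the paper leaves implicit.
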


\begin{proof}
The Hecke operator preserves $M_{3/2}(\widetilde{\Gamma}_0(4))$. Since this space is one-dimensional and is spanned by $\varphi(q)^3$, there is a number $\lambda(\ell)$ such that
$$
 \varphi^3(q)\big|T_{3/2,4}(\ell^2)=\lambda(\ell)\varphi(q)^3.
$$
The constant term of $\varphi^3(q)$ is $1$. In \eqref{half-integral-Hecke}, setting $k=3$ and $n=0$ shows that the constant term after applying the Hecke operator is
$$
 1+\ell.
$$
Hence $\lambda(\ell)=\ell+1$, proving \eqref{theta-Hecke-eigenvalue}. Comparing the coefficient of $q^n$ gives \eqref{r3-Hecke-relation}.
\end{proof}

Define
\begin{equation}\label{c-definition}
 \varphi^3(-q)=\sum_{n\geq0}c(n)q^n.
\end{equation}
Replacing $q$ by $-q$ in \eqref{phi-cube} gives
\begin{equation}\label{c-r3-relation}
 c(n)=(-1)^nr_3(n).
\end{equation}
Since $n$, $\ell^2n$, and $n/\ell^2$ have the same parity whenever the last term is defined, Proposition \ref{theta-Hecke-proposition} immediately yields the next lemma.

\begin{lemma}\label{c-Hecke-relation}
For every odd prime $\ell$ and every nonnegative integer $n$,
\begin{equation}\label{c-relation}
 c(\ell^2n)+\left(\frac{-n}{\ell}\right)c(n)
 +\ell c\left(\frac{n}{\ell^2}\right)
 =(\ell+1)c(n),
\end{equation}
where $c(x)=0$ when $x$ is not a nonnegative integer.
\end{lemma}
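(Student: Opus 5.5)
This follows from Proposition \ref{theta-Hecke-proposition} and relation \eqref{c-r3-relation}; the only thing to check is that the sign factor $(-1)^{\cdot}$ is the same in every term. Set $\varepsilon:=(-1)^n$. Start from \eqref{r3-Hecke-relation},
\[
 r_3(\ell^2n)+\left(\frac{-n}{\ell}\right)r_3(n)+\ell\, r_3\!\left(\frac{n}{\ell^2}\right)=(\ell+1)r_3(n),
\]
and multiply both sides by $\varepsilon$.

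Each term is then converted using $c(m)=(-1)^m r_3(m)$. Since $\ell$ is odd, $\ell^2$ is odd, so $\ell^2 n\equiv n\pmod 2$. Hence $(-1)^{\ell^2n}=\varepsilon$ and $\varepsilon\, r_3(\ell^2n)=c(\ell^2n)$. The terms involving $r_3(n)$ become $c(n)$ directly.

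For the third term there are two cases. If $\ell^2\mid n$, write $n=\ell^2 n'$. Then $n'\equiv n\pmod 2$, again because $\ell^2$ is odd, so $\varepsilon\, r_3(n/\ell^2)=c(n/\ell^2)$. If $\ell^2\nmid n$, then both $r_3(n/\ell^2)$ and $c(n/\ell^2)$ are $0$ by convention.

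Substituting these identifications into the multiplied relation gives \eqref{c-relation}.

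There is no real obstacle here. The only care needed is to keep the convention $c(x)=0$ for non-integral $x$ consistent with the corresponding convention for $r_3$ in the Hecke formula \eqref{half-integral-Hecke}.
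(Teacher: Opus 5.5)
Your proof is correct and matches the paper's argument: the paper also obtains the lemma from Proposition~\ref{theta-Hecke-proposition} via $c(n)=(-1)^n r_3(n)$, using the fact that $n$, $\ell^2 n$, and $n/\ell^2$ (whenever it is an integer) all have the same parity. Your version simply writes out that parity check and the case where $n/\ell^2$ is not an integer in more detail.
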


\section{Proof of Theorems \ref{CF}}
\noindent \emph{Proof of} \eqref{closedform} Since the generating function of the overpartition function $\overline{p}(n)$ is $(-q;q)_\infty/(q;q)_\infty$, where the numerator term is responsible for overlined parts, it follows that
         \begin{align*}
             \sum_{n=0}^{\infty}\textup{OSOME}(n)q^n&=\frac{\partial}{\partial z}\bigg|_{z=1}\prod_{n=1}^{\infty}\frac{\big(1+(qz)^{2n-1}\big)(1+(qz^{-1})^{2n})}{(q;q)_\infty}\notag\\
             &=\frac{(-q;q)_\infty}{(q;q)_\infty}\left(\sum_{n\geq0}\frac{(2n-1)q^{2n-1}}{1+q^{2n-1}}-\sum_{n\geq0}\frac{2nq^{2n}}{1+q^{2n}}\right).
         \end{align*}
         However, from \cite[p.~61, Eq. (3.3.6)]{spirit}, we recall that
 \begin{align*}
        \dfrac{(q;q)_\infty^4}{(-q;q)_\infty^4}=1-8\sum_{n=1}^{\infty}\left(\frac{(2n-1)q^{2n-1}}{1+q^{2n-1}}-\sum_{n\geq0}\frac{2nq^{2n}}{1+q^{2n}}\right).
  \end{align*}

  Hence, we find that
  \begin{align*}
        \sum_{n=0}^{\infty}\textup{OSOME}(n)q^n&=\frac{(-q;q)_\infty}{8(q;q)_\infty}\left(1-\frac{(-q;q)_\infty^4}{(q;q)_\infty^4}\right)\\
        &=\frac{1}{8}\left(\frac{f_2}{f_1^2}-\frac{f_1^6}{f_2^3}\right)\\
         &=\frac18\left(\frac{1}{\varphi(-q)}-\varphi(-q)^3\right).
    \end{align*}
    \emph{Proof of} \eqref{closedform1}
    We have 
    \begin{align}    \sum_{n=0}^{\infty}\textup{OSOME}_o(n)q^n&=\frac{\partial}{\partial z}\bigg|_{z=1}\prod_{n=1}^{\infty}\frac{\big(1+(qz)^{2n-1}\big)(1+q^{2n})}{(q;q)_\infty}\notag\\
             &=\frac{(-q;q)_\infty}{(q;q)_\infty}\left(\sum_{n\geq0}\frac{(2n-1)q^{2n-1}}{1+q^{2n-1}}\right)\label{OSOMEon}
    \end{align}
    However, from  \cite[p.~134, Eq. (6.2.3)]{spirit}, we recall that
    \begin{align*}
         \sum_{n\geq0}\frac{(2n-1)q^{2n-1}}{1-q^{2n-1}}=\frac{1}{24}\left(2\varphi(q)-\varphi^4(-q)-1\right),
    \intertext{replacing $q$ by $-q$ on the above, we have}
       \sum_{n\geq0}\frac{(2n-1)q^{2n-1}}{1+q^{2n-1}}=\frac{1}{24}\left(\varphi^4(q)-2\varphi(-q)+1\right).
   \end{align*}
Employing the above identity in \eqref{OSOMEon}, we find that
\begin{align*}    \sum_{n=0}^{\infty}\textup{OSOME}_o(n)q^n=\frac{1}{24\varphi^4(-q)}\left(\varphi^4(q)-2\varphi(-q)+1\right),
\end{align*}
which completes the proof of \eqref{closedform1}.

\noindent\emph{Proof of} \eqref{closedform2} The proof follows readily  from \eqref{closedform} and \eqref{closedform2}.
\section{Proofs of Theorems \ref{cong_for3,5,7,11}--\ref{mod3power}}\label{sec4}
  
The proof of \eqref{cong-mod5} is somewhat involved. So we first present the proofs of the remaining congruences.

\noindent \emph{Proofs of \eqref{cong-mod3},  \eqref{cong-mod8}, and  \eqref{cong-mod11}}.  From Theorem \ref{CF}, we note that
\begin{align*}
8\sum_{n\geq 0}\textup{OSOME}(n)q^n&=\frac{f_2}{f_1^2}-\frac{f_1^6}{f_2^3}.
\end{align*}
Therefore,
\begin{align*}
\sum_{n\geq 0}\textup{OSOME}(n)q^n&\equiv\frac{f_1^6}{f_2^3}-\frac{f_2}{f_1^2}\\
&\equiv\frac{f_3^2}{f_6}-\left(\frac{f_1^2}{f_2}\right)^2\frac{f_6}{f_3^2}\pmod{3},
\end{align*}
which by \eqref{3disf_1f_2} can be rewritten as 
\begin{align*}
\sum_{n\geq 0}\textup{OSOME}(n)q^n&\equiv\frac{f_3^2}{f_6}-\frac{f_6}{f_3^2}\left(\frac{f_9^2}{f_{18}}-2q\frac{f_3f_{18}^2}{f_6f_9}\right)^2\pmod{3}.
\end{align*}
Extracting the terms involving $q^{3n+1}$ from both sides of the above, dividing by $q$, and then replacing $q^3$ by $q$, we find that
\begin{align*}
\sum_{n\geq 0}\textup{OSOME}(3n+1)q^n
&\equiv\frac{f_3f_6}{f_1}\equiv
f_1^2f_2^3\pmod{3}.
\end{align*}
Employing \eqref{eq:two-dissection-one} and \eqref{2disf14} in the above, we have
\begin{align*}
\sum_{n\geq 0}\textup{OSOME}(3n+1)q^n
&\equiv f_1^2f_2^3\\
&\equiv \dfrac{f_1^2}{f_2}\cdot f_2^4\\
&\equiv\left(
\frac{f_8^5}{f_4^2f_{16}^2}
-
2q\frac{f_{16}^2}{f_8}
\right)
\left(
\frac{f_8^{10}}{f_4^2f_{16}^4}
-
4q^2\frac{f_4^2f_{16}^4}{f_8^2}
\right)\\
&\equiv \frac{f_8^{15}}{f_4^4f_{16}^6}
-
2q\frac{f_8^9}{f_4^2f_{16}^2}
-
4q^2f_8^3f_{16}^2
+
8q^3\frac{f_4^2f_{16}^6}{f_8^3}
\pmod{3}.
\end{align*}
Comparing the coefficients of $q^{8n+6}$, we obtain
\begin{align*}
\textup{OSOME}\bigl(3(8n+6)+1\bigr)\equiv 0\pmod{3},\end{align*}
which is equivalent to \eqref{cong-mod3}.

Next, employing \eqref{eq:two-dissection-one} and \eqref{2disreciprocal} in \eqref{closedform} and then extracting the terms involving the odd exponents of $q$, we find that
\begin{align*}
 \sum_{n\geq0}\textup{OSOME}(2n+1)q^n&=\frac{1}{4}\left(\frac{f_2^2f_8^2}{f_1^4f_4}+3\frac{f_4^9}{f_2^4f_8^2}+4q\frac{f_8^6}{f_4^3}
\right),
\end{align*}
which by \eqref{2dis1/f14} can be written as
\begin{align*}
 \sum_{n\geq0}\textup{OSOME}(2n+1)q^n&=\frac{1}{4}\left(\frac{f_8^2}{f_4}\left(\frac{f_{4}^{14}}{f_{2}^{12} f_{8}^{4}} + \frac{4 q f_{4}^{2} f_{8}^{4}}{f_{2}^{8}}\right)+3\frac{f_4^9}{f_2^4f_8^2}+4q\frac{f_8^6}{f_4^3}
\right).
\end{align*}
Extracting the terms involving $q^{2n+1}$ from both sides, we find that
\begin{align*}
    \sum_{n\geq0}\textup{OSOME}(4n+3)q^n&=\frac{f_2f_4^6}{f_1^8}+\frac{f_4^6}{f_2^3}.
\end{align*}
Employing \eqref{2dis1/f14} again in the above, we have
\begin{align*}
    \sum_{n\geq0}\textup{OSOME}(4n+3)q^n&=\frac{f_4^6}{f_2^3}+f_2f_4^6\left(\frac{f_4^{28}}{f_2^{28}f_8^8}+8q\frac{f_4^{16}}{f_2^{24}}+16q^2\frac{f_4^4f_8^8}{f_2^{20}}\right).
\end{align*}
Extracting the odd exponents of $q$ from both sides of the above, we find that
\begin{align}
   \sum_{n\geq0} \textup{OSOME}(8n+7)=8\frac{f_2^{22}}{f_1^{23}},\label{Gf8n+7}
\end{align}
from which \eqref{cong-mod8} follows readily.

Using $$
f_1^{22}\equiv f_{11}^{2} \pmod{11},
$$
 in \eqref{Gf8n+7}, we have
\begin{align}\label{gen-mod11}
\sum_{n\geq0}\textup{OSOME}(8n+7)q^n&\equiv8\frac{f_{22}^2}{f_{11}^2}\cdot\frac{1}{f_1}\\
&\equiv8\frac{f_{22}^2}{f_{11}^2}\sum_{n\geq0} p(n)q^n\pmod{11}.
\end{align}
Since $f_{22}^2/f_{11}^2$ is a series in powers of $q^{11}$ and
$$
p(11n+6)\equiv0\pmod{11},
$$
extracting the coefficients of $q^{11n+6}$ in \eqref{gen-mod11}, we arrive at
$$
\textup{OSOME}(88n+55)\equiv0\pmod{11}.
$$
This completes the proof of \eqref{cong-mod11}. 

We now proceed to prove \eqref{cong-mod5}. The following lemma simplifies the generating function for $\textup{OSOME}(n)$ and expresses it in a form suitable for the dissections.

\begin{lemma}\label{lemmabb}
\textnormal{(Baruah and Begum \cite[Eqs.~(2.6) and (2.7)]{ND})}
    \begin{align}
        \frac{f_5^5}{f_1^4f_{10}^3}&=\frac{f_5}{f_2^2f_{10}}+4q\frac{f_{10}^2}{f_1^3f_2}.\label{Lemma4a}       
     \end{align}
\end{lemma}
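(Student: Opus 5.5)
The plan is to clear denominators and reduce \eqref{Lemma4a} to a classical theta-function identity of Ramanujan. Multiplying both sides of \eqref{Lemma4a} by $f_1^4f_{10}^3/f_5$ turns it into the equivalent identity
\begin{equation*}
 f_5^4=\frac{f_1^4f_{10}^2}{f_2^2}+4q\frac{f_1f_{10}^5}{f_2f_5}.
\end{equation*}
Dividing this by $f_{10}^2$ gives
\begin{equation*}
 \frac{f_5^4}{f_{10}^2}-\frac{f_1^4}{f_2^2}=4q\frac{f_1f_{10}^3}{f_2f_5}.
\end{equation*}
The standard product formula $\varphi(-q)=f_1^2/f_2$ (already used in \eqref{overpartition-gf}) identifies the left side as $\varphi^2(-q^5)-\varphi^2(-q)$. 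So it suffices to prove
\begin{equation*}
 \varphi^2(-q)-\varphi^2(-q^5)=-4q\frac{f_1f_{10}^3}{f_2f_5}.
\end{equation*}

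For this I will invoke Ramanujan's identity (Berndt, \emph{Ramanujan's Notebooks, Part III}, Chapter 19, Entry 9)
\begin{equation*}
 \varphi^2(q)-\varphi^2(q^5)=4q\,\chi(q)f(-q^5)f(-q^{20}),
\end{equation*}
where $\chi(q)=(-q;q^2)_\infty$ and $f(-q)=f_1$. I will replace $q$ by $-q$ and convert each factor to the $f_r$ notation:
\begin{itemize}
\item $\chi(-q)=(q;q^2)_\infty=f_1/f_2$;
\item $f(q^5)=(-q^5;-q^5)_\infty=f_{10}^3/(f_5f_{20})$, the standard conversion $(-q;-q)_\infty=f_2^3/(f_1f_4)$ taken at $q^5$;
\item $f(-q^{20})=f_{20}$, which is unchanged since $(-q)^{20}=q^{20}$.
\end{itemize}
The product of these is $f_1f_{10}^3/(f_2f_5)$. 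The factor $4q$ becomes $-4q$, which gives exactly the required sign. Reversing the algebraic steps above then yields \eqref{Lemma4a}.

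The only real obstacle is making sure the cited classical identity is stated with the correct normalization and sign; the remaining steps are routine product bookkeeping. If one prefers a self-contained argument, I would instead use modular forms. Both sides of the displayed theta identity are weight-one holomorphic modular forms (eta quotients and theta products) on $\Gamma_0(20)$, with a suitable character. One would check that the eta quotient is holomorphic at all cusps, then verify agreement of coefficients up to the Sturm bound, which is about $\frac{1}{12}\cdot 36=3$ coefficients, by direct expansion.
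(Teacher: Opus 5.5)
Your proof is correct. The paper gives no argument for this lemma, only a citation to Baruah and Begum, so your reduction is an independent derivation rather than a reconstruction of the paper's proof.

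Every step checks:
\begin{itemize}
\item Multiplying by $f_1^4f_{10}^3/f_5$ and dividing by $f_{10}^2$ turns the lemma into $\varphi^2(-q^5)-\varphi^2(-q)=4qf_1f_{10}^3/(f_2f_5)$.
\item The conversions $\chi(-q)=f_1/f_2$, $f(q^5)=f_{10}^3/(f_5f_{20})$ and $f(-q^{20})=f_{20}$ are right.
\item The substitution $q\to-q$ produces exactly the needed sign.
\end{itemize}

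The identity $\varphi^2(q)-\varphi^2(q^5)=4q\chi(q)f(-q^5)f(-q^{20})$ is a classical degree-$5$ result from Chapter 19 of Berndt's Part III. It is often quoted with right side $4qf(q,q^9)f(q^3,q^7)$, which is the same thing: by the Jacobi triple product, $f(q,q^9)f(q^3,q^7)=f_{10}^2\chi(q)/\chi(q^5)=\chi(q)f_5f_{20}$. Your route shows the lemma to be nothing more than the $q\to-q$ form of this theta identity, rather than an opaque eta-quotient relation imported from the literature.

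Your modular-forms fallback is also sound. The quotient $\eta^2(2\tau)\eta(5\tau)\eta(20\tau)/(\eta(\tau)\eta(4\tau))$ has weight $1$ on $\Gamma_0(20)$ with character $d\mapsto\left(\frac{-4}{d}\right)$, the same as $\varphi^2(q)$ and $\varphi^2(q^5)$. Its orders at all cusps are nonnegative, and the Sturm bound of $3$ is correct; the coefficients in fact agree well beyond it.
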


\noindent \emph{Proof of \eqref{cong-mod5}}.  From \eqref{closedform} and \eqref{Lemma4a},  we have
    \begin{align}
   \sum_{n=0}^{\infty}\textup{OSOME}(n)q^n
   &=\frac{1}{8}\left(\frac{f_1^2f_{10}^2}{f_2f_5^4}+4q\frac{f_{10}^5}{f_1f_5^5}\right)-\frac{1}{8}\left(\frac{f_1^2f_5^4}{f_2f_{10}^2}-4q\frac{f_1^3f_{10}^3}{f_2f_5}\right)\notag\\
   &=\frac{1}{8}\left(\frac{f_1^2f_{10}^2}{f_2f_5^4}+4q\frac{f_{10}^5}{f_1f_5^5}\right)-\frac{f_1^2f_5^4}{8f_2f_{10}^2}+\frac{q}{2}\left(\frac{f_5^3f_{10}}{f_1}-4q\frac{f_{10}^6}{f_2f_5^2}\right)\notag\\ 
    &=\frac{\varphi(-q)}{8\varphi^2(-q^5)}-\frac{1}{8}\varphi(-q)\varphi^2(-q^5)+\frac{q}{2}\frac{f_{10}^5}{f_1f_5^5}+\frac{q}{2}\frac{f_5^3f_{10}}{f_1}\notag\\
    &\quad-2q^2\frac{f_{10}^6}{f_2f_5^2}.\label{before_5_diss}
\end{align}
We intend to extract the terms containing $q^{5n+2}$ from both sides of the above. Note from \eqref{5-dissofvarphi} that the first two terms on the right side of the above do not contain terms of the form $q^{5n+2}$. Therefore, we need to extract the terms that contain $q^{5n+2}$ from the last three terms only. 

Let $\left[q^{5n+2}\right]\left\{F(q)\right\}$ denote
 the term after extracting the terms involving $q^{5n+2}$, dividing by $q^2$ and then replacing $q^5$ by $q$.

Employing \eqref{c2}, we find that
\begin{align*}
\left[q^{5n+2}\right]\left\{q\frac{f_{10}^5}{f_1f_5^5}\right\}&=\frac{f_5^3f_{10}}{f_1}\left(\frac{G^3(q)}{H^3(q)}+2q\frac{H^2(q)}{G^2(q)}\right)=:\sum_{n=0}^{\infty}A(n)q^n,\\
\left[q^{5n+2}\right]\left\{q\frac{f_5^3f_{10}}{f_1}\right\}&=\frac{f_2f_{25}}{f_1^3}\left(\frac{G^3(q)}{H^3(q)}+2q\frac{H^2(q)}{G^2(q)}\right)=:\sum_{n=0}^{\infty}B(n)q^n,\\
\left[q^{5n+2}\right]\left\{q^2\frac{f_{10}^6}{f_2f_5^2}\right\}&=\frac{f_{50}}{f_1^2}\left(\frac{G^4(q^2)}{H^4(q^2)}-3q^2\frac{H(q^2)}{G(q^2)}\right)=:\sum_{n=0}^{\infty}C(n)q^n.
\end{align*}
Extracting the terms involving $q^{5n+2}$ from both sides of \eqref{before_5_diss}, dividing by $q^2$, replacing $q^5$ by $q$, and then employing the above extraction formulas, we obtain
\begin{align}
\sum_{n=0}^{\infty}\textup{OSOME}(5n+2)q^n
&=\frac{1}{2}\sum_{n=0}^{\infty}A(n)q^n
+\frac{1}{2}\sum_{n=0}^{\infty}B(n)q^n
-2\sum_{n=0}^{\infty}C(n)q^n.
\label{5n+2_dissection}
\end{align}
To prove \eqref{cong-mod5}, now we need to extract the terms involving $q^{8n+5}$ from both sides of the above under modulo 5. We achieve it by extracting $q^{4n+1}$ and $q^{2n+1}$ successively. To that end, with the aid of \eqref{ghf5byf1} and \eqref{overpartition-gf}, we have
\begin{align}\label{An-mod5}
     \sum_{n=0}^{\infty}A(n)q^n
     &=\frac{f_{10}f_5^3}{f_1}\left(\frac{G^3(q)}{H^3(q)}+2q\frac{H^2(q)}{G^2(q)}\right)\notag\\
     &\equiv\frac{f_{10}f_5^3}{f_1}\left(\frac{G^6(q)}{G^3(q)H^3(q)}+2q\frac{H^5(q)G(q)}{G^3(q)H^3(q)}\right)\notag\\
    &\equiv f_1^2f_{10}G(q)G(q^5)+2qf_1^2f_{10}G(q)H(q^5)\notag\\
    &\equiv\varphi(-q)\cdot f_2G(q)\left(f_{10}G(q^5)+2qf_{10}H(q^5)\right)\pmod{5}.
\end{align}

The 4-dissection \eqref{eq:two-dissection-one} can be rewritten as \begin{align}\varphi(-q)&=\varphi(q^4)-2q\psi(q^8).\label{phi-4-dissect}\end{align}

Employing \eqref{watson1}, \eqref{watson2}, and \eqref{phi-4-dissect} we can rewrite \eqref{An-mod5} as
\begin{align*}
 \sum_{n=0}^{\infty}A(n)q^n&\equiv f_8f_{40}\left(\varphi(q^4)-2q\psi(q^8)\right)\left(G\left(q^{16}\right)+qH(-q^4)\right)\\
&\quad\times \left(G\left(q^{80}\right)+2q G(-q^{20})+q^5H(-q^{20})+2q^{16}H(q^{80})\right).
\end{align*}
Extracting the terms involving $q^{4n+1}$ from both sides of the above, we obtain
\begin{align}
    \sum_{n=0}^{\infty}A(4n+1)q^n\notag
    &\equiv f_2f_{10}\Bigl(
 2\varphi(q)G(q^4)G(-q^5)
 +\varphi(q)H(-q)G(q^{20})
 \notag\\
&\qquad
 -2\psi(q^2)G(q^4)G(q^{20})
 +q\varphi(q)G(q^4)H(-q^5)
 \notag\\
&\qquad
 +2q^4\varphi(q)H(-q)H(q^{20}) +q^4\psi(q^2)G(q^4)H(q^{20})\Bigr)\pmod5.\notag
\end{align}
Applying  \eqref{eq:two-dissection-one}, \eqref{watson1}, and
\eqref{watson2} once more in the above, and then extracting the odd exponents of $q$, we find that
\begin{align}
 &\sum_{n=0}^{\infty}A(8n+5)q^n \notag\\
 &\equiv f_1f_{20}\varphi(q^2)G(q^2)G(-q^{10})
 -f_1f_{20}\psi(q^4)G(q^2)G(q^{40})
 +2f_4f_5\psi(q^4)G(-q^2)G(q^{10})
 \notag\\
& \quad-qf_4f_5\varphi(q^2)H(q^8)G(q^{10})-2q^2f_1f_{20}\varphi(q^2)G(q^2)H(-q^{10})
 -q^2f_4f_5\psi(q^4)G(-q^2)H(q^{10})
 \notag\\
&\quad-2q^3f_4f_5\varphi(q^2)H(q^8)H(q^{10})
 -2q^8f_1f_{20}\psi(q^4)G(q^2)H(q^{40})
\notag\\
 &\equiv
f_1f_{20}G(q^{2})
\left(
\varphi(q^2)\left(G(-q^{10})-2q^2H(-q^{10})\right)+2\psi(q^4)\left(2G(q^{40})-q^8H(q^{40})\right)\right)\notag\\
&\quad+f_4f_5\left(G(q^{10})+2q^2H(q^{10})\right)\left(2\psi(q^4)G(-q^2)-q\varphi(q^2)H(q^{8})\right)\pmod5.\label{A(n)}
 \end{align}
In a similar fashion, we find that
 \begin{align}
  &\sum_{n=0}^{\infty}B(8n+5)q^n\notag\\
     &\equiv
f_1f_{20}G(q^2)\bigl(
\varphi(q^{10})\bigl(G(-q^{10})-2q^2H(-q^{10})\bigr)
+2q^2\psi(q^{20})\bigl(2G(q^{40})-q^8H(q^{40})\bigr)\bigr)\notag\\
&\quad+\bigl(G(q^{10})+2q^2H(q^{10})\bigr)
\bigl(f_4f_5\bigl(-q\varphi(q^{10})H(q^8)
+2q^2\psi(q^{20})G(-q^2)\bigr)\notag\\
&\quad-2f_1f_5\psi(q^5)G(q^2)\bigr)\pmod5,\label{B(n)}
 \end{align}
 and
     \begin{align}
&\sum_{n=0}^{\infty}C(8n+5)q^n\notag\\
&\equiv
\frac{2f_2^6f_5^2}{f_1}
\Bigg(
\left(
 \frac{f_1^2f_4^2f_{10}^6}
      {f_2^2f_5^6f_{20}^2}
 +q\frac{f_2^4f_{20}^2}
          {f_4^2f_5^4}
\right)
\notag\\
&\quad\times
\left(
3G(q^4)H^2(-q)
 \left(G(q^{20})+2q^4H(q^{20})\right)
+3G^2(q^4)H(-q)
 \left(2G(-q^5)+qH(-q^5)\right)
\right)
\notag\\
&\quad
-2\frac{f_1f_2f_{10}^3}{f_5^5}
\left(
G^3(q^4)\left(G(q^{20})+2q^4H(q^{20})\right)
+qH^3(-q)
 \bigl(2G(-q^5)+qH(-q^5)\bigr)
\right)
\Bigg)
\notag\\
&\quad+
\frac{2f_1^2f_2^4f_{10}^2}{f_5}
\Bigg(
\frac{f_2^{14}}{f_1^{14}f_4^4}
\Big(
G^3(q^4)\bigl(2G(-q^5)+qH(-q^5)\bigr)
+3G^2(q^4)H(-q)
 \bigl(H(q^{20})\notag\\
&\quad+2q^4H(q^{20})\bigr)
\Big)+4q\frac{f_2^2f_4^4}{f_1^{10}}
\Big(
3G(q^4)H^2(-q)
 \bigl(2G(-q^5)+qH(-q^5)\bigr)\notag\\&\quad
+H^3(-q)
 \bigl(G(q^{20})+2q^4H(q^{20})\bigr)
\Big)
\Bigg)\pmod5.\label{c8nplus5a}
\end{align}
We further simplify the right side of the congruence \eqref{c8nplus5a}. Note from Lemma \ref{lemmaforreduction} that $2G(-q^5)+qH(-q^5)$ can be expressed in terms of $G(q^{20}+2q^4H(q^{20})$. So $G(q^{20}+2q^4H(q^{20})$ appears as a common factor. Simplifying more, omitting some details, we obtain
\begin{align}
    &\sum_{n\geq 0} C(8n+5)q^n\notag\\
    &\equiv-2\dfrac{f_2^7f_{10}^3}{f_5^3}H(-q)G(q^4)\left(G(q^{20})+2q^4H(q^{20})\right)\left(\frac{f_2^2f_{10}^2}{f_1^4f_4^8}-q\frac{f_1^4f_4^8}{f_2^2f_{10}^2}\right)\notag\\
    &\quad\times\left(H(-q)-2G(q^4)\frac{f_1^2f_4^4}{f_2f_{10}}\right)\pmod5.\label{C(n)}
\end{align}

We now simplify \eqref{A(n)} and \eqref{B(n)} together. We have
\begin{align*}
&\frac{1}{2}\sum_{n=0}^{\infty}A(8n+5)q^n
+\frac{1}{2}\sum_{n=0}^{\infty}B(8n+5)q^n\notag\\
    &\equiv\frac{f_1f_{20}G(q^2)}{2}
 \left(
 \Phi\left(G(-q^{10})-2q^2H(-q^{10})\right)
 +2\Psi\left(2G(q^{40})-q^8H(q^{40})\right)
 \right)
 \\
&\quad+\frac{f_5\left(G(q^{10})+2q^2H(q^{10})\right)}{2}
 \left(
 f_4\left(2\Psi G(-q^{2})-q\Phi H(q^8)\right)
 -2f_1\psi(q^5)G(q^2)
 \right)\pmod{5},
\end{align*}
where  $\Phi$ and $\Psi$ are as defined in \eqref{PhiPsi}. By Lemma \ref{lemma=0}, the above reduces to
\begin{align}
&\frac{1}{2}\sum_{n=0}^{\infty}A(8n+5)q^n
+\frac{1}{2}\sum_{n=0}^{\infty}B(8n+5)q^n\notag\\
    &\equiv\frac{f_1f_{20}G(q^2)}{2}
 \left(
 \Phi\bigl(G(-q^{10})-2q^2H(-q^{10})\bigr)
 +2\Psi\bigl(2G(q^{40})-q^8H(q^{40})\bigr)
 \right)\pmod5.\label{G10H10}
\end{align}
Now we express both $G(-q^{10})-2q^2H(-q^{10})$ and $2G(q^{40})-q^8H(q^{40})$ in terms of $G(q^{20})+2q^4H(q^{20})$, one of the common factors on the right side of \eqref{C(n)}. To this end, replacing $q$ by $q^2$ in
\eqref{Identity_1mod5} and then multiplying both sides by $3$ yields
\begin{align}
    G(-q^{10})-2q^2H(-q^{10})
\equiv
\frac{f_2^2f_8^4}{f_4f_{20}}
\left(G(q^{40})+2q^8H(q^{40})\right)\pmod5.\label{eq2a}
\end{align}

Now, using \eqref{GH_relation_3}, we have 
\begin{align*}
\left(
\frac{G(q^{40})+2q^8H(q^{40})}
     {G(q^{20})+2q^4H(q^{20})}
\right)^{2}
&\equiv\frac{f_8f_{20}}{f_4f_{40}}
\equiv\left(\frac{f_4^2}{f_8^2}\right)^2
\pmod5.
\end{align*}
Taking square roots on both sides of the above  and then choosing the appropriate sign by comparing the constant terms, we find that
\begin{align}
G(q^{40})+2q^8H(q^{40})&\equiv\frac{f_4^2}{f_8^2}\left(G(q^{20})+2q^4H(q^{20})\right)\pmod{5}.\label{eq2}
\end{align}
From \eqref{eq2a} and \eqref{eq2}, we have
\begin{align}\label{eq2aa}
    G(-q^{10})-2q^2H(-q^{10})
\equiv
\frac{f_2^2f_4f_8^2}{f_{20}}
\left(G(q^{20})+2q^4H(q^{20})\right)\pmod5.
\end{align}

Again, multiplying both sides of \eqref{eq2} by 2, we have
\begin{align}
2G(q^{40})-q^8H(q^{40})&\equiv2\frac{f_4^2}{f_8^2}\left(G(q^{20})+2q^4H(q^{20})\right)\pmod{5}.\label{eq40-}
\end{align}
Employing \eqref{eq2aa} and \eqref{eq40-} in \eqref{G10H10}, we have
\begin{align}
&\frac{1}{2}\sum_{n=0}^{\infty}A(8n+5)q^n
+\frac{1}{2}\sum_{n=0}^{\infty}B(8n+5)q^n\notag\\
    &\equiv\frac{f_1f_{20}G(q^2)}{2}\left(G(q^{20})+2q^4H(q^{20})\right)
 \left(
 \Phi\frac{f_2^2f_4f_8^2}{f_{20}}
 +2\Psi\frac{f_4^2}{f_8^2}
 \right)\pmod5.\notag
\end{align}
Using the expressions of $\Phi$ and $\Psi$ from \eqref{SON1} and \eqref{SON2} in the above, we find that
\begin{align*}
&\frac{1}{2}\sum_{n=0}^{\infty}A(8n+5)q^n
+\frac{1}{2}\sum_{n=0}^{\infty}B(8n+5)q^n\notag\\
    &\equiv\frac{f_1f_{20}G(q^2)}{2}\left(G(q^{20})+2q^4H(q^{20})\right)
 \left(
 2f_4G(q^2)H(q^8)\frac{f_2^2f_4f_8^2}{f_{20}}
 +2\frac{f_{20}G(q^2)G(q^4)}{H(q^2)G(q^8)}\frac{f_4^2}{f_8^2}
 \right)\notag\\
 &\equiv\left(G(q^{20})+2q^4H(q^{20})\right)
 G^2(q^2)
\left(
f_1f_2^2f_4^2f_8^2G(q^8)
+
\frac{2f_1f_4^2f_{20}^2G(q^4)}{f_8^2H(q^2)G(q^8)}
\right)\pmod5,
\end{align*}
which, by Lemma \ref{lemma2}, reduces to
\begin{align}
   \frac{1}{2}\sum_{n=0}^{\infty}A(8n+5)q^n
+\frac{1}{2}\sum_{n=0}^{\infty}B(8n+5)q^n
\equiv\dfrac{1}{2}f_1f_2^8f_4^2G(q^2) \left(G(q^{20})+2q^4H(q^{20})\right).\label{AB(8n+5)_simplified}
\end{align}

Extracting the terms involving $q^{8n+5}$ from both sides of \eqref{5n+2_dissection} and then using \eqref{C(n)} and \eqref{AB(8n+5)_simplified}, we obtain
\begin{align}
&\sum_{n\geq 0} \textup{OSOME}(40n+27)q^n\notag\\
&\equiv
\left(G(q^{20})+2q^4H(q^{20})\right)
\Big(
\frac{1}{2}G(q^2)f_1f_2^8f_4^2
-2\frac{f_2^9f_{10}^5}{f_1^4f_5^3f_4^8}
G(q^4)H^2(-q)\notag\\
&\quad+4\frac{f_2^8f_{10}^4}{f_1^2f_5^3f_4^4}
G^2(q^4)H(-q)
+2q\frac{f_1^4f_2^5f_4^8f_{10}}{f_5^3}
G(q^4)H^2(-q)\notag\\
&\quad
-4q\frac{f_1^6f_2^4f_4^{12}}{f_5^3}
G^2(q^4)H(-q)
\Big)\pmod5.
\end{align}
Further manipulation of the above by using Lemma \ref{lemma3} yields
\begin{align}
&\sum_{n\geq 0} \textup{OSOME}(40n+27)q^n\notag\\
&\equiv\left(G(q^{20})+2q^4H(q^{20})\right)
\Bigg(\frac{1}{2}G(q^2)f_1f_2^8f_4^2
-2G(q^2)
\left(
\frac{f_2^7f_{10}^6}{f_1^3f_5^4f_4^8}
-q\frac{f_1^5f_2^3f_4^8f_{10}^2}{f_5^4}
\right)\notag\\
&\quad\times
\left(
\psi(q^2)+q\psi(q^{10})
-\frac{f_1^2f_4^4}{f_2f_{10}}
\left(\varphi(q)+\varphi(q^5)\right)
\right)\Bigg)\pmod5.
\label{after-lemma3}
\end{align}

Now,
\begin{align}
\frac{f_1^2f_4^4}{f_2f_{10}}\varphi(q)
&=\frac{f_2^4f_4^2}{f_{10}}
\equiv\frac{f_4^2}{f_2}
=\psi(q^2)\pmod5
\label{theta-cancel}\\\intertext{and}
\frac{f_2^7f_{10}^6}{f_1^3f_5^4f_4^8}
&-q\frac{f_1^5f_2^3f_4^8f_{10}^2}{f_5^4}\equiv
\frac{f_2^3f_{10}^2}{f_4^7f_5^3}
\left(
\frac{f_2^4f_{10}^4}{f_1^3f_5f_4}
-qf_{20}^3
\right)\pmod5.
\label{product-reduction}
\end{align}
Employing \eqref{theta-cancel} and \eqref{product-reduction} in
\eqref{after-lemma3}, we have
\begin{align}
&\sum_{n\geq 0} \textup{OSOME}(40n+27)q^n\notag\\
&\equiv\left(G(q^{20})+2q^4H(q^{20})\right)
\Bigg(\frac{1}{2}G(q^2)f_1f_2^8f_4^2
-2G(q^2)\frac{f_2^3f_{10}^2}{f_4^7f_5^3}
\left(
\frac{f_2^4f_{10}^4}{f_1^3f_5f_4}
-qf_{20}^3
\right)\notag\\
&\quad\times
\left(
q\psi(q^{10})
-\frac{f_1^2f_4^4}{f_2f_{10}}
\varphi(q^5)\right)
\Bigg)\pmod5.
\label{sim_eq_2}
\end{align}

Next, from \cite[p.~509]{ndb-boruah}, we recall that
\begin{align}\label{f_1f_5product}
f_1f_5=\varphi(q^5)\psi(q^2)-q\varphi(q)\psi(q^{10}).
\end{align}
Replacing $q$ by $-q$, we obtain
\begin{align*}
\frac{f_2^3f_{10}^3}{f_1f_4f_5f_{20}}=\frac{f_5^2f_4^2}{f_{10}f_2}
+q\frac{f_1^2f_{20}^2}{f_2f_{10}},
\end{align*}
which by rearrangement implies that
\begin{align}
\frac{f_2^4f_{10}^4}{f_1^3f_5f_4}-qf_{20}^3
=\frac{f_4^2f_5^2f_{20}}{f_1^2}.
\label{BBI3-consequence}
\end{align}
Employing \eqref{BBI3-consequence} in \eqref{sim_eq_2}, we have
\begin{align}
&\sum_{n\geq 0} \textup{OSOME}(40n+27)q^n\notag\\
&\equiv\left(G(q^{20})+2q^4H(q^{20})\right)
\Bigg(\frac{1}{2}G(q^2)f_1f_2^8f_4^2
-2G(q^2)\frac{f_2^3f_{10}^2}{f_4^7f_5^3}\cdot
\frac{f_4^2f_5^2f_{20}}{f_1^2}\notag\\
&\quad\times
\left(
q\psi(q^{10})
-\frac{f_1^2f_4^4}{f_2f_{10}}\varphi(q^5)
\right)
\Bigg)\notag\\
&\equiv\frac{1}{2}G(q^2)f_1f_2^4f_{10}
\left(G(q^{20})+2q^4H(q^{20})\right)\notag\\
&\quad\times\Bigg(\psi(q^2)-\frac{4}{\varphi(-q^5)}\left(q\varphi(-q)\psi(q^{10})
-\varphi^2(-q)\varphi(q^5)\frac{f_4^4}{f_{10}}\right)
\Bigg)\pmod5.\label{lastequation}
\end{align}

However, replacing $q$ by $-q$ in \eqref{f_1f_5product}, we find that
\begin{align*}
\varphi(-q^5)\psi(q^2)+q\varphi(-q)\psi(q^{10})
&=\frac{f_2^3f_{10}^3}{f_1f_4f_5f_{20}}\equiv\varphi^2(-q)\varphi(q^5)\frac{f_4^4}{f_{10}}\pmod5,
\end{align*}
which can be rewritten as 
\begin{align}\label{qphi}
q\varphi(-q)\psi(q^{10})-\varphi^2(-q)\varphi(q^5)\frac{f_4^4}{f_{10}} 
&\equiv-\varphi(-q^5)\psi(q^2)\pmod5.
\end{align}
Using \eqref{qphi} in \eqref{lastequation}, we find that
\begin{align*}
\sum_{n\geq 0} \textup{OSOME}(40n+27)q^n&\equiv\frac{1}{2}f_1f_2^4f_{10}G(q^2)
\left(G(q^{20})+2q^4H(q^{20})\right)
\left(\psi(q^2)+4\psi(q^2)\right)\\
&\equiv0\pmod{5}.
\end{align*}
Therefore,
\begin{align*}
\textup{OSOME}(40n+27)\equiv0\pmod{5},
\end{align*}
which  completes the proof of \ref{cong-mod5}.

\begin{proof}[Proof of Theorem \ref{mod3power}]
We have
\begin{align*}
\sum_{n\geq 0}\textup{OSOME}(n)q^n
&=\frac{1}{8}\left(\frac{f_2}{f_1^2}-\frac{f_1^6}{f_2^3}\right)\\
&\equiv\frac{f_1^6}{f_2^3}-\frac{f_2}{f_1^2}
\equiv\frac{f_3^2}{f_6}-\left(\frac{f_1^2}{f_2}\right)^2\frac{f_6}{f_3^2}\pmod{3}.
\end{align*}

Using \eqref{3disf_1f_2} and extracting the terms involving $q^{3n}$  from both sides of the above and replacing $q^3$ by $q$ we find that 
\begin{align}
\sum_{n\geq 0}\textup{OSOME}(3n)q^n&\equiv\frac{f_1^2}{f_2}-\frac{f_2f_3^4}{f_1^2f_6^2}
\equiv\frac{f_1^2}{f_2}-\left(\frac{f_1^2}{f_2}\right)^5\pmod{3}.\label{OSOME3n}
\end{align}
  Employing \eqref{3disf_1f_2} then extracting the terms involving $q^{3n}$ from both side of the above and replacing $q^3$ by $q$, we obtain
\begin{align}
\sum_{n\geq 0}\textup{OSOME}(9n)q^n
&\equiv\frac{f_3^2}{f_6}-\frac{f_1^2}{f_2}\frac{f_3^4}{f_6^2}
\equiv\left(\frac{f_1^2}{f_2}\right)^3-\left(\frac{f_1^2}{f_2}\right)^7\pmod3\notag\\
&\equiv\frac{f_3^2}{f_6}-\frac{f_3^4}{f_6^2}\left(\frac{f_9^2}{f_{18}}-2q\frac{f_3f_{18}^2}{f_6f_9}\right)\pmod{3}.\label{OSOME9n}
\end{align}
Equating the coefficients of $q^{3n+2}$, we find that
\begin{align}
  \textup{OSOME}(9(3n+2))\equiv0\pmod{3}.\label{cong9n+2}
\end{align}

Again, extracting $q^{3n}$ terms from  both side of \eqref{OSOME9n} and then replacing $q^3$ by $q$, we have
\begin{align}\label{OSOME27n}
\sum_{n\geq 0}\textup{OSOME}(27n)q^n
&\equiv\frac{f_1^2}{f_2}-\left(\frac{f_1^2}{f_2}\right)^5\pmod3.
\end{align}
From \eqref{OSOME3n} and \eqref{OSOME27n}, it follows that
\begin{align*}
\textup{OSOME}\left(27n\right)\equiv\textup{OSOME}\left(3n\right)
\pmod{3}.
\end{align*}  
Therefore, 
\begin{align*}
\textup{OSOME}\left(3^4n\right)\equiv\textup{OSOME}\left(9n\right)
\pmod{3}.
\end{align*}  
By mathematical induction, for any positive integer $k$, we find that
\begin{align*}
\textup{OSOME}\left(3^{2k}n\right)\equiv\textup{OSOME}\left(9n\right)
\pmod{3}.
\end{align*}  
Replacing $n$ by $3n+2$ in the above and then employing \eqref{cong9n+2}, we arrive at
\begin{align*}
    \textup{OSOME}\left(3^{2k+1}n+2\cdot3^{2k}\right)\equiv0\pmod{3},
\end{align*}
which completes the proof of Theorem \ref{mod3power}.
\end{proof}

 \section{Proof of Theorem \ref{mod2power}}
For an arithmetic function $g$, define
\begin{align}
\textup{T}(g,\alpha,\ell,n):={}&
 \ell^3g(2^\alpha\ell^2n)
 +\ell\left(\frac{-2^\alpha n}{\ell}\right)g(2^\alpha n)
 +g\left(\frac{2^\alpha n}{\ell^2}\right)
 -(\ell^3+1)g(2^\alpha n),
\label{T-definition}
\end{align}
where $g(x)=0$ if $x$ is not a nonnegative integer. Shomanov and Garvan \cite{ShomanovGarvan2025} showed that, for every odd prime $\ell$ and all $\alpha,n\geq0$,
\begin{equation}\label{SG-congruence}
\textup{T}(\overline{p},\alpha,\ell,n)\equiv0\pmod{2^{\alpha+12}}.
\end{equation}

We compare this relation with the exact Hecke relation for $c(n)$, where $c(n)$ is as stated in \eqref{c-definition}. To that end, we state and prove two lemmas. 
\begin{lemma}\label{T-c-divisibility}
Let $\ell$ be an odd prime. If $N=2^\alpha n$ and
$$
 \chi:=\left(\frac{-N}{\ell}\right),
$$
then
\begin{align}
\textup{T}(c,\alpha,\ell,n)
 =(\ell^2-1)(\ell^2+1-\ell\chi)c(N)
 -(\ell^2-1)(\ell^2+1)c(N/\ell^2).
\label{T-c-exact}
\end{align}
\end{lemma}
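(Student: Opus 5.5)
The identity \eqref{T-c-exact} should be a purely algebraic consequence of the exact Hecke relation in Lemma \ref{c-Hecke-relation}. The plan is to substitute that relation into the definition \eqref{T-definition} and simplify.

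First, I will align the notation. Put $N=2^\alpha n$. Then the three arguments in \eqref{T-definition} are $\ell^2N$, $N$ and $N/\ell^2$. The Legendre symbol appearing there is exactly $\left(\frac{-2^\alpha n}{\ell}\right)=\left(\frac{-N}{\ell}\right)=\chi$. The convention that $c(x)=0$ when $x$ is not a nonnegative integer agrees with the convention used in Lemma \ref{c-Hecke-relation}. Hence no case distinction is needed according to whether $\ell^2\mid N$ or $\ell\mid N$; in the latter case $\chi=0$ and everything remains valid.

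Next, I apply Lemma \ref{c-Hecke-relation} with $n$ replaced by $N$ and solve for the top term:
\begin{align*}
c(\ell^2N)=(\ell+1-\chi)\,c(N)-\ell\, c(N/\ell^2).
\end{align*}
Substituting this into
\begin{align*}
\textup{T}(c,\alpha,\ell,n)=\ell^3c(\ell^2N)+\ell\chi c(N)+c(N/\ell^2)-(\ell^3+1)c(N)
\end{align*}
and collecting the coefficients of $c(N)$ and of $c(N/\ell^2)$ gives the following.
\begin{itemize}
\item The coefficient of $c(N)$ is $\ell^4+\ell^3-\ell^3\chi+\ell\chi-\ell^3-1=(\ell^4-1)-\ell\chi(\ell^2-1)$.
\item The coefficient of $c(N/\ell^2)$ is $1-\ell^4$.
\end{itemize}

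Finally, I factor out $\ell^2-1$. The first coefficient becomes $(\ell^2-1)(\ell^2+1-\ell\chi)$ and the second becomes $-(\ell^2-1)(\ell^2+1)$, which is \eqref{T-c-exact}.

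There is no real obstacle here. The only point needing care is checking that the symbol $\left(\frac{-2^\alpha n}{\ell}\right)$ in \eqref{T-definition} coincides with the symbol $\left(\frac{-N}{\ell}\right)$ produced by Lemma \ref{c-Hecke-relation} at argument $N$. This holds literally since $N=2^\alpha n$. One should also make sure the boundary conventions for $c(N/\ell^2)$ match, which they do.
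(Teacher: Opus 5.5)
Your proposal is correct and follows the paper's proof: you solve the Hecke relation for $c$ at argument $N=2^\alpha n$ for $c(\ell^2N)$, substitute it into the definition of $\textup{T}$, and collect terms. Your explicit coefficients, $(\ell^4-1)-\ell\chi(\ell^2-1)$ for $c(N)$ and $1-\ell^4$ for $c(N/\ell^2)$, are right, and they fill in the step the paper describes only as ``readily''.
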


\begin{proof}
By Lemma \ref{c-Hecke-relation},
\begin{align}
    c(\ell^2N)=(\ell+1-\chi)c(N)-\ell c(N/\ell^2).\label{c_ell_2N)}
\end{align}

From \eqref{T-definition}, we  find that
\begin{align*}
\textup{T}(c,\alpha,\ell,n)={}&
 \ell^3c(\ell^2N)
 + \ell\chi c(N)
 +c\left(\frac{N}{\ell^2}\right)
 -(\ell^3+1)c(N),
\end{align*}
 Employing  \eqref{c_ell_2N)} in the above, we readily arrive at \eqref{T-c-exact}.
\end{proof}

\begin{lemma}\label{OSOME-power-two-transfer}If $\ell$ is an odd prime such that $\ell^2\equiv1\pmod{2^{\alpha+12}}$,
then
$$
\textup{T}(\textup{OSOME},\alpha,\ell,n)\equiv0\pmod{2^{\alpha+9}}.
$$
\end{lemma}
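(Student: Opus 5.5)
By Theorem \ref{CF}, $8\,\textup{OSOME}(n)=\overline{p}(n)-c(n)$ for every $n\ge0$, where $c(n)$ is defined by \eqref{c-definition}. The operator $\textup{T}(g,\alpha,\ell,n)$ in \eqref{T-definition} is linear in $g$, so
$$
8\,\textup{T}(\textup{OSOME},\alpha,\ell,n)=\textup{T}(\overline{p},\alpha,\ell,n)-\textup{T}(c,\alpha,\ell,n).
$$
The plan is to show that the right side is divisible by $2^{\alpha+12}$ and then divide by $8$, which leaves divisibility by $2^{\alpha+9}$.

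The first term is handled directly by the Shomanov--Garvan congruence \eqref{SG-congruence}: $\textup{T}(\overline{p},\alpha,\ell,n)\equiv0\pmod{2^{\alpha+12}}$ for every odd prime $\ell$.

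For the second term I use Lemma \ref{T-c-divisibility}. With $N=2^\alpha n$ it gives
$$
\textup{T}(c,\alpha,\ell,n)=(\ell^2-1)\Bigl((\ell^2+1-\ell\chi)c(N)-(\ell^2+1)c(N/\ell^2)\Bigr).
$$
The quantities $\chi$, $c(N)$ and $c(N/\ell^2)$ are all integers, so the bracket is an integer. The hypothesis $\ell^2\equiv1\pmod{2^{\alpha+12}}$ then makes the whole expression divisible by $2^{\alpha+12}$. Hence $8\,\textup{T}(\textup{OSOME},\alpha,\ell,n)\equiv0\pmod{2^{\alpha+12}}$, and dividing by $8$ gives the claim.

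The only point needing care is the linearity step, specifically the boundary convention. All three functions $\overline{p}$, $c$ and $\textup{OSOME}$ are set to $0$ at non-integer arguments, so the identity $8\,\textup{OSOME}=\overline{p}-c$ holds termwise at each of the arguments $2^\alpha\ell^2n$, $2^\alpha n$ and $2^\alpha n/\ell^2$. The term $c(N/\ell^2)$ also vanishes when $\ell^2\nmid N$, consistent with Lemma \ref{c-Hecke-relation}. With this checked, the proof is a direct combination of the two earlier results.
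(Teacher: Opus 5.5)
Your proposal is correct and follows the paper's proof essentially verbatim: you apply the linearity of $\textup{T}$ to $8\,\textup{OSOME}=\overline{p}-c$, use the Shomanov--Garvan congruence \eqref{SG-congruence} for the $\overline{p}$-term and the factor $\ell^2-1$ from Lemma \ref{T-c-divisibility} for the $c$-term, and then divide by $8$. Your remarks on the integrality of the bracket and on the zero convention at non-integer arguments only make explicit what the paper leaves implicit.
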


\begin{proof}
The closed form of the generating function of $\textup{OSOME}(n)$ stated in Theorem \ref{CF} shows that $\textup{OSOME}(n)$, $\overline{p}(n)$, and $c(n)$  are connected by
\begin{equation}\label{basic-transfer}
 8\textup{OSOME}(n)=\overline{p}(n)-c(n).
\end{equation}
Applying the operator $\textup{T}$ to \eqref{basic-transfer}, we have
$$
 8 \textup{T}(\textup{OSOME},\alpha,\ell,n)
 = \textup{T}(\overline{p},\alpha,\ell,n)- \textup{T}(c,\alpha,\ell,n).
$$
By \eqref{SG-congruence}, the first term on the right side of the equality is divisible by $2^{\alpha+12}$. The hypothesis and Lemma \ref{T-c-divisibility} show that the second term is also divisible by $2^{\alpha+12}$. Hence
$$
 8\textup{T}(\textup{OSOME},\alpha,\ell,n)\equiv0\pmod{2^{\alpha+12}}.
$$
Thus,
$$
 \textup{T}(\textup{OSOME},\alpha,\ell,n)\equiv0\pmod{2^{\alpha+9}}.
$$
\end{proof}

\begin{proof}[Proof of Theorem \ref{mod2power}] Applying Lemma \ref{OSOME-power-two-transfer} with $n$ replaced by
$\ell m$, we obtain
\begin{align}
&\ell^3\textup{OSOME}\left(2^\alpha\ell^3m\right)
+\ell\left(\frac{-2^\alpha\ell m}{\ell}\right)
 \textup{OSOME}\left(2^\alpha\ell m\right)+\textup{OSOME}\left(\frac{2^\alpha m}{\ell}\right)\notag\\
&
\hspace{3cm}-(\ell^3+1)\textup{OSOME}\left(2^\alpha\ell m\right)\equiv0
\pmod{2^{\alpha+9}}.\label{n_replaced_by_lm}
\end{align}
Since
\begin{equation}\label{zero_rel_1}
    \left(\frac{-2^\alpha\ell m}{\ell}\right)=0
\end{equation}
and $\ell\nmid m$, the number $2^\alpha m/\ell$ is not an integer.
Therefore, by the convention in \eqref{T-definition},
\begin{equation}\label{zero_eq_2}
    \textup{OSOME}\left(\frac{2^\alpha m}{\ell}\right)=0.
\end{equation}

Hence, using \eqref{zero_rel_1} and \eqref{zero_eq_2} in \eqref{n_replaced_by_lm}, we find that
$$
\ell^3\textup{OSOME}\left(2^\alpha\ell^3m\right)
-(\ell^3+1)\textup{OSOME}\left(2^\alpha\ell m\right)
\equiv0\pmod{2^{\alpha+9}}.
$$

Furthermore,
$$
 \ell^3+1\equiv0\pmod{2^{\alpha+12}}.
$$
Thus
$$
 \ell^3\textup{OSOME}(2^\alpha\ell^3m)\equiv0\pmod{2^{\alpha+9}}.
$$
Since $\ell$ is odd, $\ell^3$ is invertible modulo $2^{\alpha+9}$, and hence
$$
 \textup{OSOME}(2^\alpha\ell^3m)\equiv0\pmod{2^{\alpha+9}}.
$$
\end{proof}

\section{Proof of Theorems \ref{general-cubic}--\ref{general-square}}\label{sec6}

The two results in this section use the same simple principle. First, an explicit congruence makes the overpartition term in \eqref{basic-transfer} vanish. We then make the theta-cube coefficient vanish, either by the Hecke relation \eqref{c-relation} or by Legendre's three-square theorem.

\begin{proof}[Proof of Theorem \ref{general-cubic}]
For each $m\in\{3,5,7,11\}$, the integer $k_m-2$ is odd. Since
$$
 \ell\equiv-1\pmod m,
$$
we have
$$
 \ell^{k_m-2}\equiv-1\pmod m.
$$
Therefore, Lemma  \ref{Ryan-overpartition-lemma}(1) yields
\begin{equation}\label{pbar-cubic-zero}
 \overline{p}(m\ell^3n)\equiv0\pmod m.
\end{equation}

We next treat the theta-cube coefficient. In \eqref{c-relation}, replace $n$ by $m\ell n$. Since $\ell\mid m\ell n$, the Legendre symbol is zero. Also, because $\ell\nmid mn$, the number $m\ell n/\ell^2$ is not an integer, so the divided-index term is zero. Hence
$$
 c(m\ell^3n)=(\ell+1) c(m\ell n).
$$ 
Applying condition $\ell\equiv-1\pmod m$ in the above, we have
\begin{equation}\label{c-cubic-zero}
 c(m\ell^3n)\equiv0\pmod m.
\end{equation}
From \eqref{basic-transfer}, \eqref{pbar-cubic-zero}, and \eqref{c-cubic-zero}, we obtain
$$
 8\textup{OSOME}(m\ell^3n)\equiv0\pmod m,
$$
which readily implies that
$$
 \textup{OSOME}(m\ell^3n)\equiv0\pmod m.
$$
Thus we complete the proof of Theorem \ref{general-cubic}.
\end{proof}
\begin{proof}[Proof of Theorem \ref{general-square}]
Clearly, the Legendre symbol in \eqref{Ryan-symbol-condition} does not change when $n$ is replaced by $4^jn$. Therefore, Lemma \ref{Ryan-overpartition-lemma}(2) gives
\begin{equation}\label{pbar-square-zero}
 \overline{p}(m4^j\ell^2n)\equiv0\pmod m.
\end{equation}

Next, we show that \begin{equation}\label{c-square-zero} 
 c(m4^j\ell^2n)=0.
\end{equation}
Since $m$ and $\ell$ are odd, we have $m^2\equiv1\pmod8$ and $\ell^2\equiv1\pmod8$. As $n\equiv7m\pmod8$, it follows that
\begin{align*}
 m\ell^2n
 &\equiv m\cdot1\cdot7m\equiv7m^2\equiv7\pmod8.
\end{align*}
Thus, for some integer $b\geq0$,
\begin{align}\label{m4j}
 m4^j\ell^2n=4^j(8b+7).
\end{align}

However, Legendre's well-known three-square theorem \cite[p.24, Theorem1]{Grosswald1985} states that $r_3(N)=0$ if and only if $ N=4^a(8b+7)$ for some $a,b\geq0$. By \eqref{m4j}, it follows that
$$
 r_3(m4^j\ell^2n)=0.
$$
Using this in \eqref{c-r3-relation}, we arrive at \eqref{c-square-zero}.

From \eqref{basic-transfer}, \eqref{pbar-square-zero}, and \eqref{c-square-zero}, we have
$$
 8\textup{OSOME}(m4^j\ell^2n)\equiv0\pmod m,
$$
which yields
$$
 \textup{OSOME}(m4^j\ell^2n)\equiv0\pmod m.
$$
\end{proof}
\section{Concluding Remarks}

\begin{remark}
Ryan, Sirolli, Villegas-Morales, and Zheng
\cite{RyanSirolliVillegasZheng2024} obtained a finite list of pairs
of primes $(m,\ell)$, with $m\in\{13,17,19\}$, for which
$$
\overline{p}(m\ell^3n)\equiv0\pmod m
$$
whenever $\ell\nmid n$. They have listed the pairs in Table 1 of \cite{RyanSirolliVillegasZheng2024}. We notice that each of the following pairs from that table satisfies
$\ell\equiv-1\pmod m$:
$$
\begin{aligned}
m=13:\quad&\ell=1871,1949,3301,4289,\\
m=17:\quad&\ell=2039,2719,3331,4079,\\
m=19:\quad&\ell=151,2659,3989.
\end{aligned}
$$
Consequently, for each pair in the above list, for every positive integer
$n$ prime to $\ell$, one can show that
\begin{align}\label{note-finite-cubic}
\textup{OSOME}(m\ell^3n)\equiv0\pmod m.
\end{align}

Ryan, Sirolli, Villegas-Morales, and Zheng
\cite{RyanSirolliVillegasZheng2024} also found a finite list of
triples $(m,\ell,\varepsilon_{m,\ell})$, where
$m\in\{13,17,19\}$ and $\varepsilon_{m,\ell}\in\{-1,1\}$ for which
$$
\overline{p}(m\ell^2n)\equiv0\pmod m
$$
for every positive integer $n$ prime to $\ell$ and 
$$
\left(\frac{(-1)^{(m-3)/2}n}{\ell}\right)
=\varepsilon_{m,\ell}.
$$
The triples listed in their table \cite[Table $2$]{RyanSirolliVillegasZheng2024} are given below:
$$
\begin{aligned}
m=13:\quad&
(\ell,\varepsilon_{13,\ell})
=(431,1),(2459,1),(4513,1),(4799,1),\\
m=17:\quad&
(\ell,\varepsilon_{17,\ell})
=(167,1),(541,1),(911,-1),(1013,-1),(1153,1),(1867,1),\\
&\hspace{20mm}(1931,-1),
(2543,-1),(2683,1),(2887,1),(3019,-1),\\
&\hspace{20mm}
(3023,1),(3329,1),(4243,-1),(4651,-1),\\
m=19:\quad&
(\ell,\varepsilon_{19,\ell})=(2207,-1).
\end{aligned}
$$
These congruences can be transferred into congruences for $\textup{OSOME}(n)$ after
imposing an additional condition. More precisely, if
$$
\ell\nmid n,\qquad
\left(\frac{(-1)^{(m-3)/2}n}{\ell}\right)
=\varepsilon_{m,\ell},
\qquad\text{and}\qquad
n\equiv7m\pmod8,
$$
then, for every integer $j\geq0$, one has
\begin{align}\label{note-finite-square}
\textup{OSOME}\left(m4^j\ell^2n\right)\equiv0\pmod m.
\end{align}
\end{remark}

The proofs of \eqref{note-finite-cubic} and \eqref{note-finite-square} are similar in nature to those given in Section \ref{sec6}. Therefore, we omit the details.

\begin{remark}The functions $\textup{OSOME}_o(n)$ and $\textup{OSOME}_e(n)$ also satisfy interesting congruences, which can be proved using the methods discussed in Section \ref{sec4}. A sample of congrunces are given below:
$$\textup{OSOME}_o(4n + 3) \equiv 0 \pmod{2},$$ 
$$\textup{OSOME}_o(6n + 5) \equiv 0 \pmod{2},$$
$$\textup{OSOME}_o(72n+11)\equiv0\pmod3,$$
$$\textup{OSOME}_o(96n+76)\equiv0\pmod3,$$
$$\textup{OSOME}_e(72n + 59) \equiv 0 \pmod{3},$$ 
$$\textup{OSOME}_e(96n + 12) \equiv 0 \pmod{3}, $$
$$\textup{OSOME}_e(96n + 76) \equiv 0 \pmod{3},$$
$$\textup{OSOME}_o(8n + 7) \equiv 0 \pmod{4},$$
$$\textup{OSOME}_o(16n + 14) \equiv 0 \pmod{4},$$
$$\textup{OSOME}_o(40n+31)\equiv0\pmod5,$$
$$\textup{OSOME}_e(40n+7)\equiv0\pmod5.$$

Furthermore, based on numerical evidence, we propose the following conjectures modulo $7$. 

\begin{conjecture}
    For every nonnegative integer $n$, 
\begin{align*}\textup{OSOME}(56n+43)&\equiv0\pmod7,\\
\textup{OSOME}_o(56n + 11) &\equiv 0 \pmod{7},\\
\textup{OSOME}_o(56n + 23) &\equiv 0 \pmod{7},\\
\textup{OSOME}_o(56n + 47) &\equiv 0 \pmod{7},\\
\textup{OSOME}_e(56n+51)&\equiv0\pmod7.
\end{align*}
\end{conjecture}
\end{remark}

\section{Acknowledgments}
The second author was partially supported by the University Grants Commission, Government of India, under
the UGC-JRF scheme (Ref. No. 221610056019). The author thanks the funding agency. 

\end{document}